\numberwithin{equation}{section}
\numberwithin{figure}{section}
\theoremstyle{plain}
\newtheorem*{thm*}{\protect\theoremname}
\theoremstyle{plain}
\newtheorem{thm}{\protect\theoremname}
\theoremstyle{definition}
\newtheorem{defn}[thm]{\protect\definitionname}
\theoremstyle{definition}
\newtheorem{example}[thm]{\protect\examplename}
\theoremstyle{plain}
\newtheorem{prop}[thm]{\protect\propositionname}
\theoremstyle{remark}
\newtheorem{rem}[thm]{\protect\remarkname}
\providecommand{\definitionname}{Definition}
\providecommand{\examplename}{Example}
\providecommand{\propositionname}{Proposition}
\providecommand{\remarkname}{Remark}
\providecommand{\theoremname}{Theorem}
\begin{document}
\author{Adrien Dubouloz}
\address{IMB UMR5584, CNRS, Univ. Bourgogne Franche-Comté, F-21000 Dijon, France.}
\email{adrien.dubouloz@u-bourgogne.fr}
\thanks{The author thanks the organizers of the Kinosaki Algebraic Geometry
Symposium for their generous support and the very stimulating discussions
held in a warm and friendly atmosphere during this conference.}
\title{$\mathbb{A}^{1}$-cylinders over smooth $\mathbb{A}^{1}$-fibered
affine surfaces}
\begin{abstract}
We give a general structure theorem for affine $\mathbb{A}^{1}$-fibrations
on smooth quasi-projective surfaces. As an application, we show that
every smooth $\mathbb{A}^{1}$-fibered affine surface non-isomorphic
to the total space of a line bundle over a smooth affine curve fails
the Zariski Cancellation Problem. The present note is an expanded
version of a talk given at the Kinosaki Algebraic Geometry Symposium
in October 2019.
\end{abstract}

\maketitle

\section*{Introduction}

The Zariski Cancellation Problem asks under which circumstances the
existence of a biregular isomorphism between the cartesian products
$X\times\mathbb{A}^{n}$ and $Y\times\mathbb{A}^{n}$ of two algebraic
varieties $X$ and $Y$ with the affine space $\mathbb{A}^{n}$, say
over a an algebraically closed field of characteristic zero, implies
that the varieties $X$ and $Y$ are isomorphic. Cancellation is known
to hold for smooth curves \cite{AEH72} and for a large class of algebraic
varieties characterized roughly by the property that they are not
dominantly covered by images of the affine line $\mathbb{A}^{1}$
(see e.g. \cite{IiFu77}, \cite{Dry07}). A specific stronger characterization
due to Makar-Limanov \cite{BML05} asserts that if $X$ is an affine
variety which does not admit any effective algebraic action of the
additive group $\mathbb{G}_{a}$ then every isomorphism $X\times\mathbb{A}^{1}\simeq Y\times\mathbb{A}^{1}$
induces an isomorphism $X\simeq Y$ (this is no longer true for products
with affine spaces $\mathbb{A}^{n}$ of higher dimension, see e.g.
\cite{Dub15,Dub19}).

Among smooth affine surfaces with an effective action of the additive
group $\mathbb{G}_{a}$, cancellation is known to hold for the affine
plane $\mathbb{A}^{2}$ by \cite{Miy75, MS80}. The first celebrated
examples of smooth affine surfaces with effective $\mathbb{G}_{a}$-actions
which fail cancellation were constructed by Danielewski \cite{Dan89}:
he established that the smooth surfaces $S_{n}$ in $\mathbb{A}^{3}$
defined by the equations $x^{n}z=y(y-1)$, where $n\geq1$, are pairwise
non-isomorphic but that their $\mathbb{A}^{1}$-cylinders $S_{n}\times\mathbb{A}^{1}$
are all isomorphic. Since then, many other families of examples of
smooth affine surfaces with effective $\mathbb{G}_{a}$-actions which
fail cancellation have been constructed (see e.g. \cite{Ru14,FKZ18}
and the references therein for a survey). All these constructions
are derived from variants of the nowadays called ``Danielewski fiber
product trick'', which depends on the study of the structure of the
algebraic quotient morphisms of $\mathbb{G}_{a}$-actions on affine
surfaces. These quotient morphisms are surjections $\pi:S\rightarrow C$
onto smooth affine curves, with generic fiber isomorphic to the affine
line $\mathbb{A}^{1}$ over the function field of $C$, called \emph{$\mathbb{A}^{1}$-fibrations}.
The local structure of these fibrations in a neighborhood of their
degenerate fibers has been studied by many authors after the pioneering
work of Miyanishi \cite{MiyBook} and Fieseler \cite{Fie94}. The
first result presented in this note is a general structure theorem
for affine $\mathbb{A}^{1}$-fibrations on smooth quasi-projective
surfaces which generalizes and encompasses, in a different language,
all formerly known descriptions:
\begin{thm*}
Let $S$ be a smooth quasi-projective surface and let $\pi:S\rightarrow C$
be an affine $\mathbb{A}^{1}$-fibration over a smooth algebraic curve
$C$. Then there exists a smooth algebraic space $\mathcal{C}$ of
dimension $1$ endowed with a surjective quasi-finite birational morphism
of finite type $\alpha:\mathcal{C}\rightarrow C$ and a factorization
\[
\pi=\alpha\circ\rho:S\stackrel{\rho}{\rightarrow}\mathcal{C}\stackrel{\alpha}{\rightarrow}C
\]
 where $\rho:S\rightarrow\mathcal{C}$ is an \'etale locally trivial
$\mathbb{A}^{1}$-bundle.
\end{thm*}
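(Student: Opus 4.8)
The plan is to construct $\mathcal{C}$ as the \emph{space of connected components of the fibers of $\pi$} and to realize $\rho$ as the natural map sending a point of $S$ to the component of its own fiber. First I would record the local structure of $\pi$. Since $S$ is a smooth surface and $\pi$ is affine and dominant onto a smooth curve, miracle flatness makes $\pi$ flat with one-dimensional fibers, and there is a dense open $U\subseteq C$ over which $\pi$ restricts to a genuine $\mathbb{A}^{1}$-bundle; only finitely many closed points $c_{1},\dots,c_{n}$ carry degenerate fibers. The essential input is the description of these degenerate fibers: each $\pi^{-1}(c_{i})$ is a disjoint union of \emph{reduced} affine lines. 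Reducedness is exactly what the conclusion needs, because a quasi-finite birational morphism $\alpha\colon\mathcal{C}\to C$ between smooth one-dimensional objects is automatically unramified (a local homomorphism of discrete valuation rings sharing a fraction field is an isomorphism), so $\alpha$ can only \emph{separate} the components of a fiber, never absorb a multiplicity; this is where characteristic zero enters.

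Over $U$ I set $\mathcal{C}|_{U}=U$ and $\rho=\pi$. Around each $c_{i}$ the task is to split the components $\Gamma_{i,1},\dots,\Gamma_{i,r_{i}}$ of $\pi^{-1}(c_{i})$ into $r_{i}$ distinct points lying over $c_{i}$. I would do this after an \'etale neighborhood $W\to C$ of $c_{i}$ on which the components become individually defined and Zariski-separable, produce there the evident modified base (a copy of $W$ for each sheet, glued to $U\times_{C}W$ away from the fiber), and then assemble the local models by \'etale descent. Equivalently, and more invariantly, one defines the \'etale equivalence relation $R\subseteq S\times_{C}S$ consisting of pairs of points lying in the same connected component of a common fiber and takes $\mathcal{C}=S/R$. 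The reason for working with algebraic spaces rather than with possibly non-separated schemes is that the monodromy of the finite set of fiber-components over $C\setminus\{c_{1},\dots,c_{n}\}$ may be nontrivial, permuting the sheets as one moves in the base, so the separation can be carried out only \'etale-locally and the quotient need not be a scheme.

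It then remains to check the asserted properties. That $\alpha\colon\mathcal{C}\to C$ is of finite type, quasi-finite, surjective and birational is immediate from the construction: it is an isomorphism over $U$ and has exactly $r_{i}$ points over each $c_{i}$, and $\mathcal{C}$ is smooth since it is \'etale-locally a smooth curve. For $\rho$, every fiber is by construction a single reduced component isomorphic to $\mathbb{A}^{1}$ and $\rho$ is affine; the remaining work is to upgrade this to \'etale-local triviality, i.e.\ to trivialize $\rho$ over the strict henselization of $\mathcal{C}$ at each point, which reduces to the local triviality of an affine $\mathbb{A}^{1}$-fibration with reduced $\mathbb{A}^{1}$ fiber over a henselian discrete valuation ring. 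I expect the main obstacle to be precisely the combination of the last two points: showing that the relative equivalence relation $R$ is representable by a smooth algebraic space when the component-monodromy is nontrivial, and simultaneously verifying that the resulting $\rho$ is \'etale-locally a trivial $\mathbb{A}^{1}$-bundle rather than merely an affine morphism with $\mathbb{A}^{1}$ fibers. This is the step that genuinely forces one out of the category of schemes and is the technical heart of the theorem.
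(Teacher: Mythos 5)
Your proposal has a genuine gap, and it sits exactly at the point that is the new content of the theorem. You assert that each degenerate fiber $\pi^{-1}(c_{i})$ is a disjoint union of \emph{reduced} affine lines and build the whole construction on that. This is false: the correct statement (Miyanishi's lemma, quoted in the paper) is that degenerate fibers are disjoint unions of affine lines \emph{when endowed with their reduced structures}; multiplicities $\geq 2$ do occur, even in characteristic zero and even on smooth affine surfaces. For instance the smooth surface $S\subset\mathbb{A}^{3}$ defined by $x^{n}z=y^{m}-x$ ($n,m\geq2$), fibered by $\mathrm{pr}_{x}$, has $\pi^{-1}(0)\simeq\mathrm{Spec}(\mathbb{C}[y]/(y^{m})[z])$, irreducible of multiplicity $m$. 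For such a fibration, your $\mathcal{C}$ --- the space of connected components of fibers --- is only the (possibly non-separated) \emph{scheme} obtained as in Example \ref{exa:Local-multi-origin}, which is a local isomorphism over $C$ at every point; hence the scheme-theoretic fiber of your $\rho$ over the point under a multiple component $mF$ is still $mF$, non-reduced, so $\rho$ is not even smooth there, let alone an \'etale $\mathbb{A}^{1}$-bundle. No set-theoretic identification of points of $S$ can remove a scheme-theoretic multiplicity. Moreover your supporting claim --- that a quasi-finite birational $\alpha$ between smooth one-dimensional objects is automatically unramified, by DVR domination --- is valid precisely for schemes and locally separated algebraic spaces, and it shows why your construction cannot succeed: to produce a reduced fiber over the point under $mF$, the morphism $\alpha$ \emph{must} have ramification index $m$ there, which is possible only at a non-locally-separated algebraic space point, where there is no Zariski local ring on which to run the domination argument (\'etale-locally, $\alpha$ is a degree-$m$ ramified cover, as in Example \ref{exa:Local-Alg-Space}).

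Relatedly, your stated reason for leaving the category of schemes --- monodromy permuting fiber components over the punctured base --- is not the right one: over the open set $U$ the fibers are connected, and separating components always yields a possibly non-separated scheme, never anything worse. The actual mechanism, and the technical heart of the paper's proof that your outline lacks, is the local treatment of multiple fibers: given $\breve{\rho}^{-1}(\breve{c}_{0})=mF$, one chooses a germ of smooth curve $\tilde{C}_{0}\hookrightarrow S$ meeting $F$ transversally at a point $s$; the restriction $\breve{\rho}|_{\tilde{C}_{0}}$ is then (after shrinking) quasi-finite of degree $m$, with ramification index $m$ at $s$ and \'etale elsewhere; the local model for $\mathcal{C}$ near the point under $F$ is the quotient of $\tilde{C}_{0}$ by the \'etale equivalence relation of Example \ref{exa:Local-Alg-Space} --- a smooth algebraic space which is not a scheme --- and one checks on an \'etale cover of $S$ that $\breve{\rho}$ factors through it by a morphism that is smooth with reduced fiber $F$. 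As it stands, your argument proves (modulo the descent details you defer) only the previously known case of fibrations with reduced fibers, due to Fieseler and Dubouloz, where $\mathcal{C}$ is indeed just a non-separated scheme.
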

In the case of a smooth $\mathbb{A}^{1}$-fibration $\pi:S\rightarrow C$
on a smooth affine surface $S$, the above result was already established
in \cite{Fie94} and \cite{Dub05} (see also \cite{FKZ18}) with the
additional observation that in this particular case, the algebraic
space curve $\alpha:\mathcal{C}\rightarrow C$ is a smooth scheme,
in general not separated. We will see below that for non-smooth $\mathbb{A}^{1}$-fibrations,
the existence of multiple fibers forces to consider algebraic space
curves $\alpha:\mathcal{C}\rightarrow C$ which are not schemes. This
fact was already observed in \cite{Dub15,Dub19} where the existence
of a factorization as above was established for some particular examples
of $\mathbb{A}^{1}$-fibrations with multiple fibers.\\

Our second result is an application of the above structure theorem
to the construction of smooth affine surfaces which fail cancellation.
By applying a new variant of the Danielwski fiber product trick construction,
we obtain the following characterization which basically fully settles
the Zariski Cancellation Problem for smooth affine surfaces:
\begin{thm*}
Let $S$ be a smooth affine surface and let $\pi:S\rightarrow C$
be an $\mathbb{A}^{1}$-fibration over a smooth affine curve $C$.
Then the following alternative holds:

a) If $\pi:S\rightarrow C$ is isomorphic to the structure morphism
of a line bundle over $C$ then every smooth affine surface $S'$
such that $S'\times\mathbb{A}^{1}\simeq S\times\mathbb{A}^{1}$ is
isomorphic to $S$.

b) Otherwise, there exists a smooth affine $\mathbb{A}^{1}$-fibered
surface $S'$ non-isomorphic to $S$ such that $S\times\mathbb{A}^{1}$
is isomorphic to $S'\times\mathbb{A}^{1}$.
\end{thm*}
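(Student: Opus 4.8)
The plan is to extract everything from the factorization $\pi=\alpha\circ\rho$ of the structure theorem, the whole dichotomy being governed by whether the quasi-finite birational morphism $\alpha\colon\mathcal{C}\rightarrow C$ is an isomorphism. First I would note that, $C$ being a normal affine curve, Zariski's main theorem forces any \emph{separated} quasi-finite birational $\mathcal{C}\rightarrow C$ to be an open immersion, hence (being surjective) an isomorphism; thus $\alpha$ is an isomorphism exactly when $\mathcal{C}$ is a genuinely non-separated algebraic space, obtained from $C$ by multiplying finitely many closed points. Conversely, if $\alpha$ is an isomorphism then $\rho=\pi$ is an \'etale-locally trivial $\mathbb{A}^{1}$-bundle over the affine curve $C$; reducing its structure group to $\mathbb{G}_{a}\rtimes\mathbb{G}_{m}$ and using $H^{1}(C,\mathcal{L})=0$ for the affine base, $\pi$ acquires a section and a linear structure, so that $S$ is the total space of a line bundle. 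Hence case (a) is \emph{equivalent} to $\mathcal{C}=C$ and case (b) to $\mathcal{C}\neq C$, which is the form I would actually use.

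For (b), since $\pi$ is the quotient morphism of a $\mathbb{G}_{a}$-action, $\rho\colon S\rightarrow\mathcal{C}$ is a torsor under $\mathcal{O}_{\mathcal{C}}$, classified by a class $\xi_{0}\in H^{1}(\mathcal{C},\mathcal{O}_{\mathcal{C}})$. A Mayer--Vietoris computation at each multiplied point shows this group to be infinite-dimensional over $k$ (it surjects onto the spaces of principal parts there), so I may choose a second class $\xi_{1}$ lying, like $\xi_{0}$, in the non-empty locus of classes whose total space is affine, and let $S'$ be the corresponding torsor. The cylinder isomorphism is then formal: setting $W=S\times_{\mathcal{C}}S'$, each projection realises $W$ as an $\mathcal{O}$-torsor over one of the affine surfaces $S,S'$, and $H^{1}(S,\mathcal{O}_{S})=H^{1}(S',\mathcal{O}_{S'})=0$ makes both torsors trivial, whence
\[
S\times\mathbb{A}^{1}\;\cong\;W\;\cong\;S'\times\mathbb{A}^{1}.
\]

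The genuine difficulty, and the step I expect to be the main obstacle, is to upgrade the bundle inequality $\xi_{0}\neq\xi_{1}$ into an \emph{abstract} non-isomorphism $S\not\cong S'$, i.e.\ to forbid isomorphisms that do not respect the fibration. When $C\not\cong\mathbb{A}^{1}$ this is clean: $\Gamma(\mathcal{C})=\Gamma(C)$, and since $C$ carries nonconstant units or has positive genus there is no nonconstant morphism $\mathbb{A}^{1}\rightarrow C$, so every $\mathbb{G}_{a}$-orbit lies in a fibre of $\pi$ and $\Gamma(C)$ coincides with the Makar-Limanov invariant of each $S_{\xi}$. The fibration to $C$, hence the intrinsic multiplied-point data defining $\mathcal{C}$, is therefore canonical, so any isomorphism $S_{\xi_{0}}\cong S_{\xi_{1}}$ descends to one of torsors up to rescaling by a unit of $\Gamma(C)$ and an element of the finite group $\mathrm{Aut}_{C}(\mathcal{C})$; since these act merely by scaling and finite permutations on the infinite-dimensional $H^{1}(\mathcal{C},\mathcal{O}_{\mathcal{C}})$, infinitely many classes $\xi_{1}$ yield $S'\not\cong S$. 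The residual case $C\cong\mathbb{A}^{1}$ is precisely Danielewski's original situation, where the Makar-Limanov invariant may collapse; there I would separate the $S_{\xi}$ by the finer invariant computing their admissible $\mathbb{A}^{1}$-fibrations, as in the classical examples $x^{n}z=y(y-1)$.

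For (a) I would prove cancellation by recovering $C$ from the cylinder. If $C\cong\mathbb{A}^{1}$ then $S\cong\mathbb{A}^{2}$ and this is the theorem of Miyanishi--Sugie \cite{Miy75,MS80}. Otherwise the computation above gives $\mathrm{ML}(S\times\mathbb{A}^{1})=\Gamma(C)$, so any isomorphism $S'\times\mathbb{A}^{1}\cong S\times\mathbb{A}^{1}$ is compatible with the canonical fibrations to $C$; applying the structure theorem to the induced fibration $S'\rightarrow C$ and observing that, since on the $S'$-side the fibre of the cylinder over $c\in C$ is the fibre of $S'\rightarrow C$ times $\mathbb{A}^{1}$ while on the $S$-side the same canonical map $S\times\mathbb{A}^{1}\rightarrow C$ is a vector bundle with reduced irreducible fibres $\mathbb{A}^{2}$, no fibre of $S'\rightarrow C$ can be reducible or multiple, I obtain that $S'$ is the total space of a line bundle $\mathcal{L}'$ over $C$. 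It remains to match $\mathcal{L}$ and $\mathcal{L}'$: the induced $C$-isomorphism $\mathrm{Tot}(\mathcal{L}\oplus\mathcal{O}_{C})\cong\mathrm{Tot}(\mathcal{L}'\oplus\mathcal{O}_{C})$ must be promoted to a linear one, after which Bass's cancellation theorem over the Dedekind ring $\Gamma(C)$ (rank $2>\dim C$, the determinant determining the bundle) gives $\mathcal{L}\cong\mathcal{L}'$ and hence $S\cong S'$. This linearisation is the one delicate point of (a), which I would settle using the canonicity of the $\mathbb{G}_{m}$-grading singled out by the Makar-Limanov invariant.
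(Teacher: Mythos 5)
Your argument for case (b) rests on the claim that, because $\pi$ comes from a $\mathbb{G}_{a}$-action, the factorizing morphism $\rho\colon S\rightarrow\mathcal{C}$ is a torsor under $\mathcal{O}_{\mathcal{C}}$, classified by a class in $H^{1}(\mathcal{C},\mathcal{O}_{\mathcal{C}})$. This is false precisely in the cases that make the theorem hard, namely when $\pi$ has a \emph{multiple} fiber. In general $\rho$ is a torsor under an \'etale locally trivial line bundle $L_{S/\mathcal{C}}$ which need not be trivial: for the surfaces $x^{n}z=y^{2}-x$ of Example \ref{exa:Mult2-fiber}, $\mathcal{C}$ is a non-scheme algebraic space with $\mathrm{Pic}(\mathcal{C})\simeq\mathbb{Z}_{2}$ and $\rho_{n,2}\colon S_{n,2}\rightarrow\mathcal{C}$ is a torsor under the \emph{non-trivial} bundle $T_{\mathcal{C}}$; concretely, every $\mathbb{G}_{a}$-action on $S_{n,2}$ fixes the reduced degenerate fiber pointwise, so no $\mathbb{G}_{a,\mathcal{C}}$-torsor structure on $\rho_{n,2}$ can exist. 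This breaks your fiber-product trick: if $S$ and $S'$ are torsors under $L=L_{S/\mathcal{C}}$ and $W=S\times_{\mathcal{C}}S'$, then the projection $W\rightarrow S$ is a torsor under $\rho^{*}L$, and its triviality (for $S$ affine) only identifies $W$ with the total space of the line bundle $\rho^{*}L$ on $S$, not with $S\times\mathbb{A}^{1}$; since $\rho^{*}\colon\mathrm{Pic}(\mathcal{C})\rightarrow\mathrm{Pic}(S)$ is an isomorphism, $\rho^{*}L$ is non-trivial whenever $L$ is. So your construction proves (b) only for fibrations with reduced fibers, the case already covered in \cite{FKZ18}, and cannot be repaired by merely substituting $L$ for $\mathcal{O}_{\mathcal{C}}$. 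The paper's detour through Theorem \ref{thm:Cylinders} exists exactly to handle this: a non-zero global section $s$ of the sheaf of $L$ (available because $C$ is affine) induces a surjection $H^{1}(s)\colon H_{\mathrm{\acute{e}t}}^{1}(\mathcal{C},\mathbb{G}_{a})\rightarrow H_{\mathrm{\acute{e}t}}^{1}(\mathcal{C},L)$, producing an auxiliary $\mathbb{G}_{a,\mathcal{C}}$-torsor $S_{0}$ with affine total space mapping equivariantly to $S$; then the two fiber products $S\times_{\mathcal{C}}S_{0}$ and $S_{0}\times_{\mathcal{C}}S'$ are each isomorphic both to a cylinder (being $\mathbb{G}_{a}$-torsors over affine surfaces) and to the total space of $\rho_{0}^{*}L$ over $S_{0}$, which is what yields $S\times\mathbb{A}^{1}\simeq S'\times\mathbb{A}^{1}$.

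Two further points. First, in (b) you defer the abstract non-isomorphism $S\not\simeq S'$ in the residual case $C\simeq\mathbb{A}^{1}$ to an unspecified ``finer invariant''; this is a genuine step, which the paper settles by choosing the function $f$ (defining the twist) to vanish to high order at the degenerate points, so that $S'$ admits an SNC-minimal completion whose boundary is not a chain --- forcing $\pi'$ to be the \emph{unique} affine-type $\mathbb{A}^{1}$-fibration on $S'$ (\cite{Be83}, \cite{Dub04}) --- and simultaneously so that the classes of $\rho$ and $\rho'$ lie in distinct orbits of $\mathrm{Aut}(\mathcal{C})$ acting on $\mathbb{P}H_{\mathrm{\acute{e}t}}^{1}(\mathcal{C},L_{S/\mathcal{C}})$. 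Second, in (a) your reduction to Miyanishi--Sugie \cite{MS80} when $C\simeq\mathbb{A}^{1}$ and the descent of the cylinder isomorphism to $C$ when $C\not\simeq\mathbb{A}^{1}$ match the paper, but the final identification $\mathcal{L}\simeq\mathcal{L}'$ does not require the linearization you flag as delicate (and your proposed fix via a ``canonical $\mathbb{G}_{m}$-grading'' is not a proof: the Makar-Limanov invariant singles out a subring, not a grading). It suffices to argue as in Theorem \ref{thm:Cylinders}: the $C$-isomorphism of cylinders carries the relative canonical bundle $\Lambda^{2}T_{S\times\mathbb{A}^{1}/C}$, which is the pull-back of $\mathcal{L}$, onto that of $S'\times\mathbb{A}^{1}$, the pull-back of $\mathcal{L}'$; since the pull-back homomorphisms on Picard groups along these $\mathbb{A}^{2}$-bundles are isomorphisms, $\mathcal{L}\simeq\mathcal{L}'$ and hence $S\simeq S'$, with no linearization needed.
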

The characterization in the above theorem strongly overlaps similar
results established in \cite{FKZ18} for smooth surfaces which admit
$\mathbb{A}^{1}$-fibrations with reduced fibers. We also refer the
reader to a forthcoming article in collaboration S. Kaliman and M.
Zaidenberg in which a similar result is established by different methods.\\

In this note, all schemes and algebraic spaces are assumed to be defined
for simplicity over the field of complex numbers $\mathbb{C}$. We
refer the reader to \cite{Knu} for the basic properties of algebraic
spaces which are used throughout the text. Some of the results in
this note are given without complete and detailed proofs, these will
appear elsewhere.

\section{Smooth $\mathbb{A}^{1}$-fibered surfaces as \'etale locally trivial
$\mathbb{A}^{1}$-bundles}
\begin{defn}
An \emph{$\mathbb{A}^{1}$-fibration} on a smooth quasi-projective
surface $S$ is a surjective affine morphism $\pi:S\rightarrow C$
to a smooth algebraic curve $C$, whose fiber over the generic point
of $C$ is isomorphic to the affine line $\mathbb{A}_{K(C)}^{1}$
over the function field $K(C)$ of $C$.

An $\mathbb{A}^{1}$-fibration on a smooth quasi-projective surface
$S$ is said to be of \emph{affine type} (resp. \emph{complete type})
if the curve $C$ is affine (resp. complete).
\end{defn}

\begin{example}
Given a smooth algebraic curve $C$, a $\mathbb{P}^{1}$-bundle $\overline{\pi}:\mathbb{P}(E)\rightarrow C$
for some vector bundle $E$ of rank $2$ on $C$ and a section $\sigma:C\rightarrow\mathbb{P}(E)$
of $\overline{\pi}$, the restriction $\pi:S=\mathbb{P}(E)\setminus\sigma(C)\rightarrow C$
of $\overline{\pi}$ to the complement of $\sigma(C)$ is an $\mathbb{A}^{1}$-fibration
which is a\emph{ Zariski locally trivial $\mathbb{A}^{1}$-bundle}
over $C$, that is, there exists a covering of $C$ by Zariski open
susbets $C_{i}$, $i\in I$, and isomorphisms $\pi^{-1}(C_{i})\simeq C_{i}\times\mathbb{A}^{1}$
of schemes over $C_{i}$ for every $i\in I$.

If $C$ is affine, such Zariski locally trivial $\mathbb{A}^{1}$-fibrations
$\pi:S\rightarrow C$ are simply line bundles. This is no longer the
case in general when $C$ is complete. For instance, let $S\subset\mathbb{A}^{3}$
be the smooth affine quadric surface with equation $xz=y^{2}-1$.
Then the morphism 
\[
\pi:S\rightarrow\mathbb{P}^{1},\quad(x,y,z)\mapsto[x:y-1]=[y+1:z]
\]
is a Zariski locally trivial $\mathbb{A}^{1}$-bundle which cannot
be a line bundle. Indeed, otherwise the zero section of this line
bundle would be a complete curve contained in $S$, which is impossible
as $S$ is affine.
\end{example}

\begin{prop}
\label{prop:No-degen-implies-A1Bun }\cite[Lemma 1.3]{KaMiy78} Let
$S$ be a smooth quasi-projective surface and let $\pi:S\rightarrow C$
be an $\mathbb{A}^{1}$-fibration onto a smooth curve $C$. Assume
that all scheme-theoretic fibers of $\pi:S\rightarrow C$ are irreducible
and reduced. Then $\pi:S\rightarrow C$ is Zariski locally trivial
$\mathbb{A}^{1}$-bundle.
\end{prop}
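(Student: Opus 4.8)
The plan is to reduce the statement to a local question on $C$ and then exploit a smooth projective completion of $\pi$ as a $\mathbb{P}^{1}$-fibration. First I would record that $\pi$ is flat: since $S$ and $C$ are smooth (so $S$ is Cohen--Macaulay and $C$ is regular of dimension $1$) and all fibers are equidimensional of dimension $1$, miracle flatness shows that $\pi$ is flat, and surjectivity then makes it faithfully flat. As Zariski local triviality is a property local on the base, it suffices to produce, for each closed point $c\in C$, a Zariski open neighborhood $U\ni c$ together with an isomorphism $\pi^{-1}(U)\simeq U\times\mathbb{A}^{1}$ of schemes over $U$.

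Next I would pass to a smooth completion: embed $S$ into a smooth projective surface $\overline{S}$ and, after resolving the indeterminacies of the rational map $\overline{S}\dashrightarrow\overline{C}$ extending $\pi$ to the smooth projective model $\overline{C}\supset C$, obtain a $\mathbb{P}^{1}$-fibration $\overline{\pi}:\overline{S}\rightarrow\overline{C}$ restricting to $\pi$ over $C$, with boundary divisor $D=\overline{S}\setminus S$. The \textbf{main obstacle}, and the heart of the argument, is to control the degenerate fibers. Each fiber $\overline{F}_{c}=\overline{\pi}^{-1}(c)$ is a connected tree of smooth rational curves meeting transversally, and since $\pi$ is affine, $F_{c}=\pi^{-1}(c)=\overline{F}_{c}\setminus D$ contains no complete curve. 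The hypothesis that $F_{c}$ is reduced and irreducible then forces exactly one component $E$ of $\overline{F}_{c}$ to meet $S$, to occur with multiplicity one, and to meet $D$ in a single reduced point; consequently $E\simeq\mathbb{P}^{1}$ and $F_{c}=E\setminus(E\cap D)\simeq\mathbb{A}^{1}$. In particular every scheme-theoretic fiber of $\pi$ is isomorphic to $\mathbb{A}^{1}$ and $\pi$ is a smooth morphism.

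Finally I would upgrade this fiberwise information to local triviality. Working near a fixed $c$, I would successively contract the $(-1)$-curves of $\overline{F}_{c}$ contained in the boundary $D$; since the unique component $E$ meeting $S$ must survive (contracting it would modify $S$, which is fixed), this yields, over some open $U\ni c$, a $\mathbb{P}^{1}$-bundle $\overline{\pi}:\mathbb{P}(\mathcal{E})|_{U}\rightarrow U$ in which the image of $D$ is a section $\sigma$. Writing $\sigma=\mathbb{P}(L)$ for a sub-line-bundle $L\subset\mathcal{E}|_{U}$, the complement $\pi^{-1}(U)=\mathbb{P}(\mathcal{E})|_{U}\setminus\sigma(U)$ is the total space of the line bundle $(\mathcal{E}/L)\otimes L^{-1}$ over $U$, hence a trivial $\mathbb{A}^{1}$-bundle after further shrinking $U$. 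As $c\in C$ was arbitrary, $\pi:S\rightarrow C$ is a Zariski locally trivial $\mathbb{A}^{1}$-bundle.

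Alternatively, the last step can be carried out algebraically: over $R=\mathcal{O}_{C,c}$ with uniformizer $t$ and residue field $\kappa$, one lifts a coordinate of the reduced fiber $A/tA\simeq\kappa[\overline{x}]$ to an element $x\in A=\mathcal{O}(\pi^{-1}(\operatorname{Spec}R))$ and checks, using flatness together with the identification $A\otimes_{R}K(C)=K(C)[x]$, that the inclusion $R[x]\hookrightarrow A$ is an isomorphism; this gives $\pi^{-1}(\operatorname{Spec}R)\simeq\mathbb{A}_{R}^{1}$, an isomorphism that then spreads out over a Zariski neighborhood of $c$.
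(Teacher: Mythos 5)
Your overall strategy (flatness via miracle flatness, completion to a $\mathbb{P}^{1}$-fibration $\overline{\pi}:\overline{S}\rightarrow\overline{C}$, contraction of boundary components to exhibit $\pi^{-1}(U)$ as a $\mathbb{P}^{1}$-bundle minus a section) is a legitimate geometric route, close in spirit to \cite{Fie94} and \cite{MiyBook}; note that the paper itself gives no proof at all but simply cites \cite[Lemma 1.3]{KaMiy78}, whose argument is of the algebraic kind you sketch in your final paragraph. However, there is a genuine gap at the heart of your fiber analysis. From the hypotheses one does get that exactly one component $E$ of $\overline{F}_{c}$ meets $S$, that $E$ has multiplicity one, and (by affineness) that $E\cap D\neq\emptyset$; but nothing you say forces $E$ to meet $D$ in a \emph{single} point. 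A priori $E$ could meet the horizontal boundary curve $H$ at one point and a vertical branch $B\subset D$ of the tree $\overline{F}_{c}$ at another, giving $F_{c}\simeq\mathbb{G}_{m}$ (or $\mathbb{P}^{1}$ minus more points): such a fiber is irreducible, reduced, affine, and contains no complete curve, so none of the properties you invoke excludes it. Ruling this out is precisely the content of \cite[Lemma 1.4.2]{MiyBook}, which the paper quotes as a separate, non-trivial input (every fiber of an $\mathbb{A}^{1}$-fibration is a disjoint union of affine lines). A real argument is needed here, using affineness of the \emph{morphism} and not just of the fibers: for instance, a vertical branch $B$ of $D\cap\overline{F}_{c}$ disjoint from $H$ consists of a proper subset of the components of a fiber, hence carries a negative definite intersection form by Zariski's lemma, and this contradicts the fact (Goodman's criterion applied to the affine open $\pi^{-1}(V)$, after suitable blow-ups in the boundary) that the boundary supports an effective divisor meeting every one of its own components positively. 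Some such argument, or a cohomological substitute, must be supplied.

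This gap propagates into both of your concluding steps. In the contraction step, if $E\cap D$ were not a single point, the images of the contracted boundary components of $\overline{F}_{c}$ would be finitely many points of the new fiber not lying on the section $\sigma$, so $\pi^{-1}(U)$ would be the $\mathbb{P}^{1}$-bundle minus $\sigma(U)$ \emph{and} finitely many extra points, and the line-bundle conclusion fails. (Separately, your parenthetical ``$E$ must survive since contracting it would modify $S$'' is not an argument that the contraction can always avoid $E$; what is needed is the standard fact that a multiplicity-one component of a degenerate fiber can be kept alive throughout, because a degenerate fiber always contains a $(-1)$-curve distinct from any prescribed multiplicity-one component. This is true and standard, but deserves a citation rather than that remark.) Finally, your ``alternative'' algebraic ending begs the same question: writing $A/tA\simeq\kappa[\overline{x}]$ already assumes the closed fiber is a polynomial ring, i.e.\ is $\mathbb{A}^{1}$, which is exactly what has to be deduced from ``irreducible and reduced''.
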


\begin{defn}
Let $\pi:S\rightarrow C$ be an $\mathbb{A}^{1}$-fibration on a smooth
quasi-projective surface $S$. A scheme-theoretic fiber of $\pi$
over a closed point $c$ of $C$ which is either reducible or non-reduced
is called \emph{degenerate}.
\end{defn}

By \cite[Lemma 1.4.2]{MiyBook}, every degenerate fiber of an $\mathbb{A}^{1}$-fibration
$\pi:S\rightarrow C$ is a disjoint union of curves isomorphic to
the complex affine line $\mathbb{A}^{1}$ when endowed with respective
reduced structures.
\begin{example}
Let $n\geq2$, let $P(y)=\prod_{i=1}^{r}(y-y_{i})^{m_{i}}\in\mathbb{C}[y]$
be a non-constant monic polynomial with $r\geq1$ distinct roots $y_{i}\in\mathbb{C}$
of respective multiplicities $m_{i}\geq1$ and let $S\subset\mathbb{A}^{3}$
be the affine surface defined by the equation $x^{n}z=P(y)-x$. Then
$S$ is smooth by the Jacobian criterion and the projection $\mathrm{pr}_{x}$
induces an $\mathbb{A}^{1}$-fibration $\pi:S\rightarrow\mathbb{A}^{1}$
which restricts to the trivial $\mathbb{A}^{1}$-bundle $\mathbb{A}^{1}\setminus\{0\}\times\mathrm{Spec}(\mathbb{C}[y])$
over $\mathbb{A}^{1}\setminus\{0\}=\mathrm{Spec}(\mathbb{C}[x^{\pm1}])$.
On the other hand, the scheme-theoretic fiber $\pi^{-1}(\{0\})$ decomposes
as the disjoint union of the schemes 
\[
F_{i}=\mathrm{Spec}(\mathbb{C}[y]/((y-y_{i})^{m_{i}}[z]),\quad i=1,\ldots,r
\]
whose reductions are all isomorphic to the affine line $\mathbb{A}^{1}=\mathrm{Spec}(\mathbb{C}[z])$.
\end{example}

\subsection{The smooth relatively connected quotient of an $\mathbb{A}^{1}$-fibration}

In this subsection, we show that every $\mathbb{A}^{1}$-fibration
$\pi:S\rightarrow C$ on a smooth quasi-projective surface $S$ factors
through a smooth morphism with connected fibers $\rho:S\rightarrow\mathcal{C}$
over a suitably defined algebraic space curve $\mathcal{C}$ over
$C$.
\begin{defn}
\label{def:Smooth-curve-nfold} Let $C$ be a smooth algebraic curve.
A \emph{smooth multifold algebraic space $C$-curve }is a smooth algebraic
space $\mathcal{C}$ of dimension $1$ endowed with a surjective quasi-finite
birational morphism of finite type $\alpha:\mathcal{C}\rightarrow C$
such that $\alpha_{*}\mathcal{O}_{\mathcal{C}}=\mathcal{O}_{C}$.
\end{defn}

By generic smoothness, there exists a non empty maximal Zariski open
subset $U$ of $C$ over which $\alpha:\mathcal{C}\rightarrow C$
restricts to an \'etale morphism $\alpha:\alpha^{-1}(U)\rightarrow U$.
Given any separated open subset $\mathcal{V}\subset\alpha^{-1}(U)$,
the restriction $\alpha|_{\mathcal{V}}:\mathcal{V}\rightarrow C$
is a separated birational quasi-finite \'etale morphism. Since a
quasi-finite morphism is quasi-affine, $\mathcal{V}$ is thus a quasi-projective
scheme and $\alpha|_{\mathcal{V}}:\mathcal{V}\rightarrow C$ is an
open immersion by Zariski main theorem \cite[Théorème 8.12.6]{EGAIV-3}.
It follows in particular that there exists finitely many points $c_{1},\ldots,c_{s}$
of $C$ over which $\alpha:\mathcal{C}\rightarrow C$ is not an isomorphism.
Furthermore, for every $i=1,\ldots,s$, the fiber $\alpha^{-1}(c_{i})$
consists of finitely many points $\breve{c}_{i,1},\ldots,\breve{c}_{i,r_{i}}$,
and if $\alpha$ is unramified at $\breve{c}_{i,j}$ then there exists
a separated Zariski open neighborhood $\breve{U}_{i,j}$ of $\breve{c}_{i,j}$
in $\mathcal{C}$ such that $\alpha|_{\breve{U}_{i,j}}:\breve{U}_{i,j}\rightarrow C$
is an isomorphism onto its image. So one can picture a smooth multifold
algebraic space $C$-curve $\alpha:\mathcal{C}\rightarrow C$ as being
obtained from the curve $C$ by ``replacing'' finitely many points
$c_{1},\ldots,c_{s}$ of $C$ by a collection of finitely many distinct
algebraic space curve points $\breve{c}_{i,1},\ldots,\breve{c}_{i,r_{i}}$.
\begin{example}
\label{exa:Local-multi-origin}Let $C$ be smooth affine curve, let
$c_{0}\in C$ be a closed point and let $\mathcal{C}$ be the curve
obtained by gluing $r\geq2$ copies $\alpha_{i}:\mathcal{C}_{i}\stackrel{\simeq}{\rightarrow}C$,
$i=1,\ldots,r$, of $C$ by the identity outside the points $c_{i}=\alpha_{i}^{-1}(c_{0})$.
The curve $\mathcal{C}$ is a non-separated scheme on which the morphisms
$\alpha_{i}$ glue to a surjective quasi-finite birational morphism
$\alpha:\mathcal{C}\rightarrow C$ which coincides with the canonical
affinization morphism $\mathcal{C}\rightarrow\mathrm{Spec}(\Gamma(\mathcal{C},\mathcal{O}_{\mathcal{C}}))$.
The restriction of $\alpha$ over $C\setminus\{c_{0}\}$ is an isomorphism
whereas $\alpha^{-1}(c_{0})$ consists of $r$ distinct point $c_{i}$,
$i=1,\ldots,r$.
\end{example}

\begin{example}
\label{exa:Local-Alg-Space}Let $C$ be a smooth algebraic curve,
let $c_{0}\in C$ be a closed point and let $\varphi:\tilde{C}\rightarrow C$
be a quasi-finite morphism of degree $m\geq2$, with branching index
$m$ at $c$ and \'etale elsewhere. Let $\tilde{c}_{0}=\varphi^{-1}(c_{0})$
and let $\mathcal{C}$ be the smooth algebraic space obtained from
$\tilde{C}$ by identifying two points $\tilde{c},\tilde{c}'\in\tilde{C}\setminus\{\tilde{c}_{0}\}$
if $\varphi(\tilde{c})=\varphi(\tilde{c}')$. More rigorously, letting
$C_{*}=C\setminus\{c\}$ and $\tilde{C}_{*}=\tilde{C}\setminus\{\tilde{c}_{0}\}=\varphi^{-1}(C_{*})$,
$\mathcal{C}$ is the quotient of $\tilde{C}$ by the \'etale equivalence
relation 
\[
R=\mathrm{Diag}\sqcup j:\tilde{C}\sqcup\tilde{C}_{*}\times_{C_{*}}\tilde{C}_{*}\longrightarrow\tilde{C}\times_{C}\tilde{C},
\]
where $\mathrm{Diag}:\tilde{C}\rightarrow\tilde{C}\times_{C}\tilde{C}$
is the diagonal morphism and $j:\tilde{C}_{*}\times_{C_{*}}\tilde{C}_{*}\rightarrow\tilde{C}\times_{C}\tilde{C}$
is the natural open immersion. The $R$-invariant morphism $\varphi:\tilde{C}\rightarrow C$
descends through the quotient morphism $q:\tilde{C}\rightarrow\mathcal{C}=\tilde{C}/R$
to a bijective quasi-finite morphism $\alpha:\mathcal{C}\rightarrow C$
restricting to an isomorphism over $C_{*}$. On the other hand, the
inverse image of $c_{0}$ by $\alpha$ consists of a unique point
$\mathfrak{c}_{0}=q(\tilde{c}_{0})$, at which $\alpha$ ramification
index $m$. The sheaf $\alpha_{*}\mathcal{O}_{\mathcal{C}}$ is equal
to the $\mathcal{O}_{C}$-submodule of $\varphi_{*}\mathcal{O}_{\tilde{C}}$
consisting of germs of $R$-invariant regular functions on $\tilde{C}$,
hence is equal to $\mathcal{O}_{C}$. Since $R$ is not a locally
closed immmersion in a neighborhood of the point $\tilde{c}_{0}\in\tilde{C}$,
it follows that the algebraic space $\mathcal{C}$ is not locally
separated in a neighborhood of the point $\mathfrak{c}_{0}$, hence
is not a scheme.
\end{example}

\begin{thm}
\label{thm:Main-Struct}Let $\pi:S\rightarrow C$ be an $\mathbb{A}^{1}$-fibration
on a smooth quasi-projective surface. Then there exists a smooth multifold
algebraic space $C$-curve $\alpha:\mathcal{C}\rightarrow C$ unique
up to $C$-isomorphism and a smooth affine morphism with connected
fibers $\rho:S\rightarrow\mathcal{C}$ such that $\pi=\alpha\circ\rho$.
\end{thm}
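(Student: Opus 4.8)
The plan is to construct $\mathcal{C}$ and $\rho$ by a local analysis over $C$ in a neighbourhood of each degenerate fiber of $\pi$, and then to glue the local pieces with the generic part. First I would isolate the degenerate locus: since the generic fiber of $\pi$ is the irreducible reduced line $\mathbb{A}^1_{K(C)}$ and $S$ is a surface, only finitely many closed points $c_1,\dots,c_s\in C$ carry a degenerate fiber. Set $U=C\setminus\{c_1,\dots,c_s\}$. Over $U$ all scheme-theoretic fibers are irreducible and reduced, so Proposition~\ref{prop:No-degen-implies-A1Bun } applies and $\pi|_{\pi^{-1}(U)}$ is a Zariski locally trivial $\mathbb{A}^1$-bundle. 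On this part I take $\alpha=\mathrm{id}_U$ and $\rho=\pi$, which already exhibits the desired factorization over $U$.

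The heart of the argument is the construction over a small neighbourhood of a single degenerate point $c_0$. By \cite[Lemma~1.4.2]{MiyBook} the reduced fiber $\pi^{-1}(c_0)_{\mathrm{red}}$ is a disjoint union of affine lines $F_1,\dots,F_r$, occurring in the scheme-theoretic fiber with multiplicities $m_1,\dots,m_r$. I want $\mathcal{C}$ to carry exactly $r$ points $\breve c_{0,1},\dots,\breve c_{0,r}$ above $c_0$, the $j$-th having ramification index $m_j$, so that $\rho^{-1}(\breve c_{0,j})=(F_j)_{\mathrm{red}}$ is a single reduced affine line. I build the base componentwise. When $m_j=1$ the local piece $\mathcal{C}_j$ is just a copy of the neighbourhood of $c_0$. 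When $m_j\geq 2$ the local piece $\mathcal{C}_j$ is the non-schematic algebraic space of Example~\ref{exa:Local-Alg-Space}, obtained from a degree-$m_j$ cover $\varphi_j$ totally ramified over $c_0$ by identifying points of $\tilde C$ with the same image away from the ramification point. The key local fact to establish is that the normalized base change of $S$ along such a ramified cover acquires a reduced fiber, hence is trivial by Proposition~\ref{prop:No-degen-implies-A1Bun } over the local base, and that this trivial $\mathbb{A}^1$-bundle descends along the étale quotient $q\colon\tilde C\to\mathcal{C}_j$ to a smooth affine morphism $\rho_j$ with irreducible reduced fibers. Gluing the finitely many $\mathcal{C}_j$ along the punctured neighbourhood $C\setminus\{c_0\}$ by the identity, exactly as in Example~\ref{exa:Local-multi-origin}, produces a smooth multifold algebraic space $C$-curve over the chosen neighbourhood, with $r$ points over $c_0$, together with a morphism splitting the components of the degenerate fiber.

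It then remains to glue the generic bundle over $U$ with the finitely many local models: all overlaps lie over $U$, where each local construction restricts to the identity on the base, so the pieces glue to a smooth multifold algebraic space $C$-curve $\alpha\colon\mathcal{C}\to C$ and to a morphism $\rho\colon S\to\mathcal{C}$ with $\pi=\alpha\circ\rho$. The relation $\alpha_*\mathcal{O}_{\mathcal{C}}=\mathcal{O}_C$ holds because it holds locally, as computed in Examples~\ref{exa:Local-multi-origin} and~\ref{exa:Local-Alg-Space}. That $\rho$ is smooth, affine, and has connected fibers is checked étale-locally, where $\rho$ is the projection of a trivial $\mathbb{A}^1$-bundle. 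For uniqueness I would argue that $\rho$ realizes $\mathcal{C}$ as the universal multifold $C$-curve through which $\pi$ factors with connected fibers: the points of $\mathcal{C}$ over each $c_i$ are forced to be the connected components of $\pi^{-1}(c_i)$, the condition $\alpha_*\mathcal{O}_{\mathcal{C}}=\mathcal{O}_C$ pins down the structure away from the ramification, and smoothness of $\rho$ forces the ramification index of $\alpha$ at each point over $c_i$ to equal the multiplicity of the corresponding fiber component; any two such factorizations therefore induce a $C$-isomorphism of the bases.

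I expect the main obstacle to be the multiple-fiber case: proving rigorously that the ramified base change untwists a fiber of multiplicity $m$ into a reduced one, that the trivialized bundle descends along the non-separated étale quotient $q$ to a genuinely smooth affine $\rho$, and that the quotient is an honest algebraic space satisfying $\alpha_*\mathcal{O}_{\mathcal{C}}=\mathcal{O}_C$ with ramification index matching the multiplicity. Setting up the universal property precisely enough to yield the uniqueness of $\mathcal{C}$ up to $C$-isomorphism will require comparable care.
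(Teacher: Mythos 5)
Your overall architecture matches the paper's: finitely many degenerate points, Proposition \ref{prop:No-degen-implies-A1Bun } over their complement, one new point of $\mathcal{C}$ per connected component of each degenerate fiber, the non-scheme algebraic space of Example \ref{exa:Local-Alg-Space} at each multiple component, and gluing over the punctured base as in Example \ref{exa:Local-multi-origin}. The genuine differences are in the key local step and in the uniqueness argument. For a component $F$ of multiplicity $m$, the paper does not take an abstract totally ramified cover: it produces the cover \emph{inside} $S$, as a germ of smooth curve $\tilde{C}_{0}\hookrightarrow S$ meeting $F$ transversally at a smooth point; the restriction of the fibration to $\tilde{C}_{0}$ then automatically has ramification index exactly $m$ there (transversality plus multiplicity $m$), and the factorization of $S$ through the resulting quotient space is verified \'etale-locally on $S$. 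Your route --- abstract cyclic cover, normalized base change, $\mu_{m}$-descent --- is the one carried out in the paper's toy model of subsection \ref{subsec:Toy-Example}, and it does work in characteristic zero, but it requires the extra Abhyankar-type lemma you correctly identify as the main obstacle: writing $t=(\mathrm{unit})\cdot f^{m}$ in the discrete valuation ring $\mathcal{O}_{S,F}$ and $u^{m}=t$ on the base change, the extension is generated by $u/f$, an $m$-th root of a unit, hence is \'etale along $F$, so the pulled-back fiber $\{u=0\}$ is reduced on the normalization. The paper's transverse-germ trick is precisely the device that makes this verification local and concrete rather than an appeal to a separate lemma.

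Two points in your sketch need repair. First, you base change ``$S$'' along the degree-$m_{j}$ cover; if several components $F_{1},\ldots,F_{r}$ with distinct multiplicities lie over $c_{0}$, this pullback becomes reduced only along the components over $F_{j}$ and remains non-reduced over the others (the multiplicities change to $m_{i}/\gcd(m_{i},m_{j})$), so you must first discard the other components and work on $\pi^{-1}(C_{0})\setminus\bigcup_{i\neq j}F_{i}$. The paper sidesteps this by a two-stage construction: first the non-separated scheme $\breve{C}$ separating \emph{all} connected components, and only then a correction of multiplicities one point at a time. Second, your uniqueness argument --- matching points and ramification indices of $\mathcal{C}$ and $\mathcal{C}'$ --- does not by itself produce a morphism of algebraic spaces; for spaces that are not locally separated, a point-by-point identification carries no structure. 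The paper's argument is the clean one: given a second factorization $\rho':S\rightarrow\mathcal{C}'$, the morphism $\rho'$ is constant on the fibers of $\rho$, and since $\rho$ is a smooth, hence faithfully flat, surjection, descent yields a unique $C$-morphism $\varphi:\mathcal{C}\rightarrow\mathcal{C}'$ with $\rho'=\varphi\circ\rho$; exchanging the roles of $\mathcal{C}$ and $\mathcal{C}'$ shows $\varphi$ is an isomorphism. You should substitute this descent step for your ``universal property'' paragraph.
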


\begin{proof}[Sketch of proof]
 A smooth multifold algebraic space $C$-curve $\alpha:\mathcal{C}\rightarrow C$
with the desired properties is obtained as follows. Let $c_{1},\ldots,c_{s}$
be the points over which the fibers of $\pi:S\rightarrow C$ are degenerate
and let $\breve{\alpha}:\breve{C}\rightarrow C$ be the scheme obtained
from $C$ as in Example \ref{exa:Local-multi-origin} by replacing
each point $c_{s}$ by distinct scheme points $\breve{c}_{i,1},\ldots,\breve{c}_{i,r_{i}}$,
one for each connected component $F_{i,j}$ of the fiber $\pi^{-1}(c_{i})$,
$i=1,\ldots,s$. The unique morphism $\breve{\rho}:S\rightarrow\breve{C}$
defined by 
\[
\breve{\rho}(s)=\begin{cases}
\breve{\alpha}^{-1}(\pi(s)) & \textrm{if }\pi^{-1}(\pi(s))\textrm{ is connected}\\
\breve{c}_{i,j} & \textrm{if }\text{\ensuremath{\pi(s)=c_{i}\textrm{ and }s\in F_{i,j}} }
\end{cases}
\]
is affine with connected fibers and satisfies $\pi=\breve{\alpha}\circ\breve{\rho}$.
Since $S$ is smooth and every connected component of a fiber of $\pi$
is irreducible and smooth when equipped with its reduced structure,
we see that $\breve{\rho}:S\rightarrow\breve{C}$ is smooth over a
point $\breve{c}$ of $\breve{C}$ if and only if $\breve{\rho}^{-1}(\breve{c})$
is a reduced irreducible component of $\pi^{-1}(\breve{\alpha}(\breve{c}))$.

Let $\breve{c}_{0}\in\breve{C}$ be a point such that $\breve{\rho}^{-1}(\breve{c}_{0})=mF$,
where $F\simeq\mathbb{A}^{1}$, is multiple, of multiplicity $m\geq2$
and let $s$ be a closed point of $S$ supported on $F$. Since $S$
and $F$ are smooth at $s$, there exists a germ of smooth curve $\tilde{C}_{0}\hookrightarrow S$
intersecting $F$ transversally at $s$. The restriction $\varphi=\breve{\rho}|_{\tilde{C}_{0}}:\tilde{C}_{0}\rightarrow\breve{C}$
is quasi-finite onto its image. By shrinking $\tilde{C}_{0}$ if necessary
we can assume without loss generality that the image of $\varphi$
is an affine open neighborhood $C_{0}$ of $\breve{c}_{0}$ in $\breve{C}$
with the property that $\pi^{-1}(\breve{\alpha}(\breve{c}_{0}))$
is the unique degenerate fiber of $\pi$ over $\breve{\alpha}(C_{0})$
and that $\varphi:\tilde{C}_{0}\rightarrow C_{0}$ is a quasi-finite
morphism of degree $m\geq2$, with ramification index $m$ at $s$
and \'etale elsewhere. Let $\beta_{\breve{c}_{0}}:\tilde{C}_{0}/R\rightarrow C_{0}$
be the algebraic space curve over $C_{0}$ determined by $\varphi:\tilde{C}_{0}\rightarrow C_{0}$
as in Example \ref{exa:Local-Alg-Space} and let $\alpha_{\breve{c}_{0}}:\mathcal{C}_{\breve{c}_{0}}\rightarrow\breve{C}$
be the algebraic space curve over $\breve{C}$ obtained by gluing
$\breve{C}\setminus\{\breve{c}_{0}\}$ with $\tilde{C}_{0}/R$ with
by the identity along the open subsets $C_{0}\setminus\{\breve{c}_{0}\}$
and $\beta_{\breve{c}_{0}}^{-1}(C_{0}\setminus\{\breve{c}_{0}\})\simeq C_{0}\setminus\{\breve{c}_{0}\}$
of $\breve{C}\setminus\{\breve{c}_{0}\}$ and $\tilde{C}_{0}/R$ respectively.
Letting $\mathfrak{c}_{0}=\alpha_{\breve{c}_{0}}^{-1}(\breve{c}_{0})$,
one checks locally on an \'etale cover of $S$ that $\breve{\rho}:S\rightarrow\breve{C}$
factors through an affine morphism $\rho_{\breve{c}_{0}}:S\rightarrow\mathcal{C}_{\breve{c}_{0}}$
smooth over a Zariski open neighborhood of $\mathfrak{c}_{0}$ and
such that $\rho_{\breve{c}_{0}}^{-1}(\mathfrak{c}_{0})=F$. By repeating
the above construction for each of the finitely many points of $\breve{C}$
over which the fiber of $\breve{\rho}$ is multiple, we obtain a smooth
multifold algebraic space $C$-curve $\alpha:\mathcal{C}\rightarrow C$
and a smooth affine morphism with connected fibers $\rho:S\rightarrow\mathcal{C}$
factoring $\pi$.

By construction, for every closed point $c\in C$, the fibers of $\rho:S\rightarrow\mathcal{C}$
over the points in $\alpha^{-1}(c)$ are in one-to-one correspondence
with the connected components of the fiber of $\pi:S\rightarrow C$
over $c$. Therefore, if $\alpha':\mathcal{C}'\rightarrow C$ is another
smooth multifold algebraic space $C$-curve with the same properties
then its associated smooth morphism $\rho':S\rightarrow\mathcal{C}'$
is locally constant on the fibers of $\pi:S\rightarrow C$ hence constant
on the fibers of $\rho:S\rightarrow\mathcal{C}$. This implies in
turn by faithfully flat descent that there exists a unique morphism
$\varphi:\mathcal{C}\rightarrow\mathcal{C}'$ of algebraic spaces
over $C$ such that $\rho'=\varphi\circ\rho$. Reversing the roles
of $\mathcal{C}$ and $\mathcal{C}'$, we conclude that $\varphi$
is a $C$-isomorphism.
\end{proof}
\begin{defn}
Let $\pi:S\rightarrow C$ be an $\mathbb{A}^{1}$-fibration on a smooth
quasi-projective surface $S$. The smooth multifold algebraic space
$C$-curve $\alpha:\mathcal{C}\rightarrow C$ such that $\pi$ factors
as $\pi=\alpha\circ\rho$ for some smooth affine morphism $\rho:S\rightarrow\mathcal{C}$
with connected fibers is called the \emph{smooth relatively connected
quotient} of $\pi:S\rightarrow C$.

It follows from the proof of Theorem \ref{thm:Main-Struct} that the
isomorphism type of $\alpha:\mathcal{C}\rightarrow C$ as a space
over $C$ depends only on the irreducible components, taken with their
respective multiplicities of the scheme-theoretic degenerate fibers
of $\pi:S\rightarrow C$.
\end{defn}

\subsection{\label{subsec:Toy-Example}Illustration on a toy local model}

Let $C$ be the spectrum of a discrete valuation ring $\mathcal{O}$
with maximal ideal $\mathfrak{m}$ and residue field $\mathbb{C}$
and let $t\in\mathfrak{m}$ be a uniformizing parameter. Given integers
$n,m\geq2$, let $S_{n,m}$ be the smooth affine surface in $C\times\mathbb{A}^{2}$
defined by the equation $t^{n}z=y^{m}-t$. The restriction to $S_{n,m}$
of the projection $\mathrm{pr}_{C}$ is an $\mathbb{A}^{1}$-fibration
$\pi:S_{n,m}\rightarrow C$ whose fiber over the closed point $c$
of $C$ is irreducible of multiplicity $m$, isomorphic to $\mathrm{Spec}(\mathbb{C}[y]/(y^{m})[z])$.
The curve
\[
\tilde{C}=\{z=0\}\simeq\mathrm{Spec}(\mathcal{O}[y]/(y^{m}-t))
\]
on $S$ is smooth and the restriction of $\pi$ to $\tilde{C}$ is
a finite Galois cover $\varphi:\tilde{C}\rightarrow C$ totally ramified
over the closed point of $C$, with Galois group equal to the group
$\mu_{m}$ of complex $m$-th roots of unity, The normalization $\tilde{S}_{n,m}$
of the fiber product $S_{n,m}\times_{C}\tilde{C}$ is isomorphic to
the smooth affine surface in $\tilde{C}\times\mathbb{A}^{2}$ defined
by the equation $y^{(n-1)m}z=u^{m}-1$. The Galois group $\mu_{m}$
acts on $\tilde{S}_{n,m}$ by $(y,z,u)\mapsto(\varepsilon y,\varepsilon^{-1}u,z)$,
where $\varepsilon$ is a primitive $m$-th root of unity, and the
quotient morphism $\tilde{S}_{n,m}\rightarrow\tilde{S}_{n,m}/\mu_{m}\simeq S_{n,m}$
is \'etale. The restriction to $\tilde{S}_{n,m}$ of the projection
$\mathrm{pr}_{\tilde{C}}$ is an $\mathbb{A}^{1}$-fibration $\tilde{\pi}:\tilde{S}_{n,m}\rightarrow\tilde{C}$
whose fiber over the closed point $\tilde{c}$ of $\tilde{C}$ is
reduced, consisting of $m$ disjoint copies $\tilde{F}_{i}=\{y=u-\varepsilon^{i}=0\}$,
$i=0,\ldots,m-1$, of the affine line $\mathbb{A}^{1}=\mathrm{Spec}(\mathbb{C}[z])$,
which form a unique orbit of the action of $\mu_{m}$ on $\tilde{S}_{n,m}$.

Let $\tilde{\alpha}:\tilde{\mathcal{C}}\rightarrow\tilde{C}$ be the
scheme obtained as in Example \ref{exa:Local-multi-origin} by gluing
$m$ copies $\tilde{\alpha}_{i}:\tilde{\mathcal{C}}_{i}\stackrel{\simeq}{\rightarrow}\tilde{C}$
of $\tilde{C}$ by the identity outside the points $\tilde{c}_{i}=\tilde{\alpha}_{i}^{-1}(\tilde{c})$
and let $\tilde{\rho}:\tilde{S}_{n,m}\rightarrow\tilde{\mathcal{C}}$
be the unique morphism lifting $\tilde{\pi}:\tilde{S}_{n,m}\rightarrow\tilde{C}$
and mapping $\tilde{F}_{i}$ to $\tilde{c}_{i}$. For every $i=0,\ldots,m-1$,
the open subset $\tilde{S}_{n,m}^{i}=\tilde{S}_{n,m}\setminus\bigcup_{j\neq i}\tilde{F}_{j}$
of $\tilde{S}_{n,m}$ is isomorphic to $\tilde{C}\times\mathrm{Spec}(\mathbb{C}[v_{i}])$
where $v_{i}$ is the regular function on $\tilde{S}_{n,m}^{i}$ defined
as the restriction of the rational function 
\[
v_{i}=\frac{(u-\varepsilon^{i})}{y^{(n-1)m}}=\frac{z}{\prod_{j\neq i}(u-\varepsilon^{j})}
\]
on $\tilde{S}_{n,m}$. Via the so-defined isomorphism $\tilde{S}_{n,m}^{i}\simeq\tilde{C}\times\mathbb{A}^{1}$,
the restriction of $\tilde{\rho}:\tilde{S}_{n,m}\rightarrow\tilde{\mathcal{C}}$
on $\tilde{S}_{n,m}^{i}$ coincides with the composition of the projection
$\mathrm{pr}_{\tilde{C}}$ with the inclusion of $\tilde{C}$ as the
open subset $\tilde{\mathcal{C}}_{i}$ of $\tilde{\mathcal{C}}$.
It follows that $\tilde{\rho}:\tilde{S}_{n,m}\rightarrow\tilde{\mathcal{C}}$
is a Zariski locally trivial $\mathbb{A}^{1}$-bundle, in particular,
a smooth morphism with connected fibers.

The action of the Galois group $\mu_{m}$ on $\tilde{S}_{n,m}$ descends
to a free $\mu_{m}$-action on $\tilde{\mathcal{C}}$ defined by 
\[
\tilde{\mathcal{C}}_{i}\ni y\mapsto\varepsilon y\in\tilde{\mathcal{C}}_{i+1\textrm{ mod }m}.
\]
The geometric quotient of $\tilde{\mathcal{C}}$ by this $\mu_{m}$-action
is an algebraic space $\mathcal{C}$, isomorphic to the algebraic
space $\tilde{C}/R$ associated to the finite cover $\varphi:\tilde{C}\rightarrow C$
as in Example \ref{exa:Local-Alg-Space}, and the quotient morphism
$q:\tilde{\mathcal{C}}\rightarrow\mathcal{C}=\tilde{\mathcal{C}}/\mu_{m}$
is an \'etale $\mu_{m}$-torsor. Let 
\[
\rho:S_{n,m}\simeq\tilde{S}_{n,m}/\mu_{m}\rightarrow\tilde{\mathcal{C}}/\mu_{m}=\mathcal{C}\;\textrm{and}\;\alpha:\mathcal{C}=\tilde{\mathcal{C}}/\mu_{m}\rightarrow\tilde{C}/\mu_{m}\simeq C
\]
be the morphisms induced by the $\mu_{m}$-equivariant morphisms $\tilde{\rho}:\tilde{S}_{n,m}\rightarrow\tilde{\mathcal{C}}$
and $\tilde{\alpha}:\tilde{\mathcal{C}}\rightarrow\tilde{C}$ respectively.
The inverse image of $c$ by $\alpha$ consists of a unique point
$\mathfrak{c}$ which is the image by $q:\tilde{\mathcal{C}}\rightarrow\mathcal{C}$
of the $\mu_{m}$-orbit formed by the points $\tilde{c}_{i}$, $i=0,\ldots,m-1$,
and we have a commutative diagram \[\xymatrix{\tilde{S}_{n,m} \ar[d]_{\tilde{\rho}} \ar[r] & S_{n,m}\simeq \tilde{S}_{n,m}/\mu_m \ar[d]^{\rho} \\ \tilde{\mathcal{C}} \ar[r]^{q} \ar[d]_{\tilde{\alpha}} & \mathcal{C}=\tilde{\mathcal{C}}/\mu_m \ar[d]^{\alpha} \\ \tilde{C} \ar[r] & C\simeq \tilde{C}/\mu_m}\]
in which the top square is cartesian. Since $q:\tilde{\mathcal{C}}\rightarrow\mathcal{C}$
is \'etale and $\tilde{\rho}:\tilde{S}_{n,m}\rightarrow\tilde{\mathcal{C}}$
is smooth, the morphism $\rho:S_{n,m}\rightarrow\mathcal{C}$ is thus
smooth, with fiber over $\mathfrak{c}$ equal to $\pi^{-1}(c)$ endowed
with its reduced structure.

\subsection{\label{subsec:L-torsors}$\mathbb{A}^{1}$-fibrations as torsors
under \'etale locally trivial line bundles}

Let $\pi:S\rightarrow C$ be an $\mathbb{A}^{1}$-fibration on a smooth
quasi-projetive surface, let $\alpha:\mathcal{C}\rightarrow C$ be
its smooth relatively connected quotient and let $\rho:S\rightarrow\mathcal{C}$
be the corresponding smooth affine morphism with connected fibers.
For every \'etale morphism $f:B\rightarrow\mathcal{C}$ from a smooth
algebraic curve $B$, the projection $\mathrm{pr}_{B}:S\times_{\mathcal{C}}B\rightarrow B$
is a smooth $\mathbb{A}^{1}$-fibration with connected fibers, hence
is a Zariski locally trivial $\mathbb{A}^{1}$-bundle by virtue of
Proposition \ref{prop:No-degen-implies-A1Bun }. It follows that $\rho:S\rightarrow\mathcal{C}$
is an \'etale locally trivial $\mathbb{A}^{1}$-bundle over $\mathcal{C}$.
Summing-up, we obtain:
\begin{thm}
\label{thm:A1Fib-Torsor}Every $\mathbb{A}^{1}$-fibration $\pi:S\rightarrow C$
on a smooth quasi-projective surface $S$ decomposes as an \'etale
locally trivial $\mathbb{A}^{1}$-bundle $\rho:S\rightarrow\mathcal{C}$
over a smooth multifold algebraic $C$-space curve $\mathcal{C}$
followed by the structure morphism $\alpha:\mathcal{C}\rightarrow C$
of $\mathcal{C}$.
\end{thm}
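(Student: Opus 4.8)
The plan is to deduce this statement directly from the structure Theorem \ref{thm:Main-Struct} together with the Kambayashi--Miyanishi criterion recorded in Proposition \ref{prop:No-degen-implies-A1Bun }. First I would apply Theorem \ref{thm:Main-Struct} to produce the smooth relatively connected quotient $\alpha:\mathcal{C}\to C$ and the smooth affine morphism with connected fibers $\rho:S\to\mathcal{C}$ satisfying $\pi=\alpha\circ\rho$. Since this factorization through $\alpha$ is already the asserted decomposition, it remains only to upgrade ``smooth affine with connected fibers'' to ``\'etale locally trivial $\mathbb{A}^{1}$-bundle'' for $\rho$.

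Second, because $\mathcal{C}$ is a smooth one-dimensional algebraic space, it admits an \'etale atlas $f:B\to\mathcal{C}$ by a smooth scheme curve $B$, which I may take to be a disjoint union of smooth affine curves. I would then base change $\rho$ along $f$ and analyze $\mathrm{pr}_{B}:S\times_{\mathcal{C}}B\to B$. As $\rho$ is affine, its base change $\mathrm{pr}_{B}$ is affine over the scheme $B$, so $S\times_{\mathcal{C}}B$ is again a scheme; as $\rho$ is smooth with connected fibers, so is $\mathrm{pr}_{B}$, and since $f$ is \'etale the generic fiber over each component of $B$ remains $\mathbb{A}^{1}$ over the relevant function field. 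Thus $\mathrm{pr}_{B}$ is a smooth $\mathbb{A}^{1}$-fibration with connected fibers on a smooth scheme surface.

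The third step is to verify that $\mathrm{pr}_{B}$ has no degenerate fibers, so that Proposition \ref{prop:No-degen-implies-A1Bun } applies. Here I use that every fiber of an $\mathbb{A}^{1}$-fibration is a disjoint union of affine lines with reduced structure by \cite[Lemma 1.4.2]{MiyBook}: connectedness forces a single such component, hence irreducibility, while smoothness of $\mathrm{pr}_{B}$ forces reducedness. Therefore every scheme-theoretic fiber of $\mathrm{pr}_{B}$ is irreducible and reduced, and Proposition \ref{prop:No-degen-implies-A1Bun } yields that $\mathrm{pr}_{B}$ is a Zariski locally trivial $\mathbb{A}^{1}$-bundle over $B$. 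Since $f:B\to\mathcal{C}$ is an \'etale cover, this exhibits $\rho$ as trivializing over an \'etale cover of $\mathcal{C}$, i.e. as an \'etale locally trivial $\mathbb{A}^{1}$-bundle, which together with $\pi=\alpha\circ\rho$ completes the argument.

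I expect the main delicate point to be the bookkeeping around algebraic spaces rather than any geometric difficulty: one must confirm that the fiber product over the algebraic space $\mathcal{C}$ is genuinely a scheme, which rests on the affineness of $\rho$, and that Zariski local triviality over a scheme atlas packages correctly into \'etale local triviality over $\mathcal{C}$ itself. The geometric input --- that connected plus smooth forces honest $\mathbb{A}^{1}$ fibers, hence the hypothesis of Proposition \ref{prop:No-degen-implies-A1Bun } --- is routine once the descent setup over the atlas is in place.
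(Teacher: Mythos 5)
Your proposal is correct and is essentially the paper's own argument: the paper likewise obtains $\rho:S\rightarrow\mathcal{C}$ from Theorem \ref{thm:Main-Struct} and deduces \'etale local triviality by base changing along \'etale morphisms $f:B\rightarrow\mathcal{C}$ from smooth curves and then applying Proposition \ref{prop:No-degen-implies-A1Bun }. The details you supply (schemehood and quasi-projectivity of $S\times_{\mathcal{C}}B$ via affineness of $\rho$, and irreducibility plus reducedness of fibers via connectedness, smoothness and \cite[Lemma 1.4.2]{MiyBook}) are precisely the points the paper leaves implicit.
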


Since the automorphism group of the affine line $\mathbb{A}^{1}$
is the affine group $\mathrm{Aff}_{1}=\mathbb{G}_{m}\ltimes\mathbb{G}_{a}$,
every \'etale locally trivial $\mathbb{A}^{1}$-bundle $\rho:S\rightarrow\mathcal{C}$
is an \'etale affine-linear bundle, isomorphic to the associated
fiber bundle $V\times^{\mathrm{Aff}_{1}}\mathbb{A}^{1}\rightarrow\mathcal{C}$
of a principal homogeneous $\mathrm{Aff}_{1}$-bundle $V\rightarrow\mathcal{C}$
over $\mathcal{C}$. It follows that there exists a uniquely determined
\'etale locally trivial line bundle $p:L_{S/\mathcal{C}}\rightarrow\mathcal{C}$,
considered as a locally constant group scheme over $\mathcal{C}$
for the group law induced by the addition of germs of sections, such
that $\rho:S\rightarrow\mathcal{C}$ can be further equipped with
the structure of an \'etale $L_{S/\mathcal{C}}$-torsor, that is,
a principal homogeneous bundle under the action of $L_{S/\mathcal{C}}$.
Namely, the class of $L_{S/\mathcal{C}}$ in the Picard group $\mathrm{Pic}(\mathcal{C})=H_{\mathrm{\acute{e}t}}^{1}(\mathcal{C},\mathbb{G}_{m})$
of $\mathcal{C}$ coincides with the image of the isomorphism class
of $V\rightarrow\mathcal{C}$ in $H_{\mathrm{\acute{e}t}}^{1}(\mathcal{C},\mathrm{Aff}_{1})$
by the map $H_{\mathrm{\acute{e}t}}^{1}(\mathcal{C},\mathrm{Aff}_{1})\rightarrow H_{\mathrm{\acute{e}t}}^{1}(\mathcal{C},\mathbb{G}_{m})$
in the long exact sequence of non-abelian cohomology

\[
\cdots\rightarrow H^{0}\left(\mathcal{C},\mathbb{G}_{m}\right)\rightarrow H_{\mathrm{\acute{e}t}}^{1}\left(\mathcal{C},\mathbb{G}_{a}\right)\rightarrow H_{\mathrm{\acute{e}t}}^{1}\left(\mathcal{C},\mathrm{Aff}_{1}\right)\rightarrow H_{\mathrm{\acute{e}t}}^{1}\left(\mathcal{C},\mathbb{G}_{m}\right)
\]
associated to the short exact sequence $0\rightarrow\mathbb{G}_{a}\rightarrow\mathrm{Aff}_{1}\rightarrow\mathbb{G}_{m}\rightarrow0$
of \'etale sheaves of groups on $\mathcal{C}$. Isomorphism classes
of \'etale torsors under a given \'etale locally trivial line bundle
$p:L\rightarrow\mathcal{C}$ are in turn classified by the cohomology
group $H_{\mathrm{\acute{e}t}}^{1}(\mathcal{C},L)$ (see e.g. \cite[XI.4]{SGA1}
or \cite[III.2.4]{Gir}).

The line bundle $p:L_{S/\mathcal{C}}\rightarrow\mathcal{C}$ associated
to a given \'etale locally trivial $\mathbb{A}^{1}$-bundle $\rho:S\rightarrow\mathcal{C}$
can be alternatively described as follows: since $\rho:S\rightarrow\mathcal{C}$
is an \'etale locally trivial $\mathbb{A}^{1}$-bundle, the pull-back
homomorphism 
\[
\rho^{*}:H_{\mathrm{\acute{e}t}}^{1}(\mathcal{C},\mathbb{G}_{m})\rightarrow H_{\mathrm{\acute{e}t}}^{1}(S,\mathbb{G}_{m})\simeq\mathrm{Pic}(S)
\]
is an isomorphism. The same local description as in \cite[16.4.7]{EGAIV-4}
then implies that $L_{S/\mathcal{C}}$ is the unique \'etale line
bundle on $\mathcal{C}$ whose image by $\rho^{*}$ is equal to the
relative tangent line bundle $T_{S/\mathcal{C}}=\mathrm{Spec}(\mathrm{Sym}^{\cdot}\Omega_{S/\mathcal{C}})\rightarrow S$
of $\rho:S\rightarrow\mathcal{C}$, where $\Omega_{S/\mathcal{C}}$
denotes the sheaf of relative Kähler differentials of $S$ over $\mathcal{C}$.
\begin{example}
\label{exa:Mult2-fiber}Let $Q\subset\mathbb{P}^{2}$ be a smooth
plane conic. The pencil $\mathbb{P}^{2}\dashrightarrow\mathbb{P}^{1}$
generated by $Q$ and twice its projective tangent line $L_{q}$ at
a given point $q\in Q$ restricts on $S_{1}=\mathbb{P}^{2}\setminus Q$
to an $\mathbb{A}^{1}$-fibration $\pi_{1}:S_{1}\rightarrow\mathbb{A}^{1}$
with irreducible fibers, having 
\[
\pi_{1}^{-1}(0)=(2L_{q}\cap S_{1})\simeq2\mathbb{A}^{1}
\]
as a unique degenerate fiber of multiplicity $2$. Similarly, given
any interger $n\geq2$, the smooth surface $S_{n,2}\subset\mathbb{A}^{3}$
defined by the equation $x^{n}z=y^{2}-x$ has an $\mathbb{A}^{1}$-fibration
$\pi_{n,2}:S_{2}\rightarrow\mathbb{A}^{1}$ induced by the projection
$\mathrm{pr}_{x}$ whose unique degenerate fiber $\pi_{n,2}^{-1}(0)\simeq\mathrm{Spec}(\mathbb{C}[y]/(y^{2})[z])$
is irreducible of multiplicity $2$.

By Theorem \ref{thm:A1Fib-Torsor}, $\pi_{1}$ and $\pi_{2}$ factors
through \'etale locally trivial $\mathbb{A}^{1}$-bundles over the
multifold algebraic space curve $\alpha:\mathcal{C}\rightarrow\mathbb{A}^{1}$
defined as the quotient of $\tilde{\mathbb{A}}^{1}=\mathrm{Spec}(\mathbb{C}[u])$
by the equivalence relation 
\[
\mathbb{A}^{1}\setminus\{0\}\ni u\sim-u\in\mathbb{A}^{1}\setminus\{0\}.
\]
Equivalently, $\mathcal{C}$ is the quotient of the affine line with
a double origin $\tilde{\alpha}:\tilde{\mathcal{C}}\rightarrow\tilde{\mathbb{A}}^{1}$
obtained by gluing two copies $\tilde{\mathcal{C}}_{\pm}$ of $\tilde{\mathbb{A}}^{1}$
by the identity outside their respective origins by the free $\mu_{2}$-action
defined by $\tilde{\mathcal{C}}_{\pm}\ni u\mapsto-u\in\tilde{\mathcal{C}}_{\mp}$.
\'Etale descent along the quotient $\mu_{2}$-torsor $q:\tilde{\mathcal{C}}\rightarrow\tilde{\mathcal{C}}/\mu_{2}\simeq\mathcal{C}$
induces a one-to-one correspondence between isomorphism classes of
\'etale locally trivial line bundles on $\mathcal{C}$ and isomorphism
classes of $\mu_{2}$-linearized line bundles on $\tilde{\mathcal{C}}$.
Since the two origins of $\tilde{\mathcal{C}}$ form a unique $\mu_{2}$-orbit,
every $\mu_{2}$-linearized line bundle on $\tilde{\mathcal{C}}$
is the pull-back by $\tilde{\alpha}:\tilde{\mathcal{C}}\rightarrow\mathbb{A}^{1}$
of a $\mu_{2}$-linearlized line bundle on $\mathbb{A}^{1}$ for the
$\mu_{2}$-action $u\mapsto-u$. It follows that $\mathrm{Pic}(\mathcal{C})$
is isomorphic to $\mathbb{Z}_{2}$, generated by the class of the
\'etale locally trivial line bundle $p:L\rightarrow\mathcal{C}$
corresponding to the trivial line bundle $\tilde{\mathcal{C}}\times\mathrm{Spec}(\mathbb{C}[\ell])$
endowed with the non-trivial $\mu_{2}$-linearization given by $\ell\mapsto-\ell$
on the second factor. Noting that this line bundle on $\tilde{\mathcal{C}}$
is isomorphic to the cotangent line bundle $T_{\tilde{\mathcal{C}}}^{\vee}$
of $\tilde{\mathcal{C}}$ endowed with its canonical $\mu_{2}$-linearization,
we see that $L$ is isomorphic to the cotangent line bundle $T_{\mathcal{C}}^{\vee}\rightarrow\mathcal{C}$
of $\mathcal{C}$.

Let $\rho_{1}:S_{1}\rightarrow\mathcal{C}$ and $\rho_{n,2}:S_{n,2}\rightarrow\mathcal{C}$
be the \'etale locally trivial $\mathbb{A}^{1}$-bundles factoring
$\pi_{1}$ and $\pi_{n,2}$ respectively. The Picard group of $S_{1}=\mathbb{P}^{2}\setminus Q$
is isomorphic to $\mathbb{Z}_{2}$ generated by the restriction of
$\mathcal{O}_{\mathbb{P}^{1}}(1)$, which is equal to canonical line
bundle $\Lambda^{2}T_{S_{1}}^{\vee}$ of $S_{1}$. The Picard group
of $S_{n,2}$ is also isomorphic to $\mathbb{Z}_{2}$, generated for
instance by the line bundle corresponding to the Cartier divisor $D=\{x=y=0\}$
on $S_{n,2}$, but in contrast, since $S_{n,2}$ is a smooth hypersurface
in $\mathbb{A}^{3}$, its canonical bundle $\Lambda^{2}T_{S_{n,2}}^{\vee}$
is trivial by adjunction formula. Since the homomorphisms $\rho_{1}^{*}:\mathrm{Pic}(\mathcal{C})\rightarrow\mathrm{Pic}(S_{1})$
and $\rho_{n,2}^{*}:\mathrm{Pic}(\mathcal{C})\rightarrow\mathrm{Pic}(S_{n,2})$
are isomorphisms, we then deduce from the cotangent exact sequences
\[
0\rightarrow\rho^{*}T_{\mathcal{C}}^{\vee}\rightarrow T_{S}^{\vee}\rightarrow T_{S/\mathcal{C}}^{\vee}\rightarrow0
\]
of the morphisms $\rho_{1}:S_{1}\rightarrow\mathcal{C}$ and $\rho_{n,2}:S_{n,2}\rightarrow\mathcal{C}$
respectively that 
\[
T_{S_{1}/\mathcal{C}}^{\vee}\simeq(\rho_{1}^{*}T_{\mathcal{C}}^{\vee})^{\vee}\otimes\Lambda^{2}T_{S_{1}}^{\vee}\simeq\rho_{1}^{*}T_{\mathcal{C}}\otimes\rho_{1}^{*}T_{\mathcal{C}}^{\vee}
\]
is the trivial line bundle on $S_{1}$and that the line bundle $T_{S_{n,2}/\mathcal{C}}^{\vee}$
is isomorphic to $\rho_{n,2}^{*}T_{\mathcal{C}}$.

This implies in turn that $\rho_{1}:S_{1}\rightarrow\mathcal{C}$
is an \'etale locally trivial torsor under the trivial line bundle
on $\mathcal{C}$, in other words, a principal homogeneous $\mathbb{G}_{a,\mathcal{C}}$-bundle,
whereas $\rho_{n,2}:S_{n,2}\rightarrow\mathcal{C}$ is an \'etale
locally trivial torsor under the tangent line bundle $T_{\mathcal{C}}$
of $\mathcal{C}$. One can check further on a suitable \'etale cover
of $\mathcal{C}$ that the isomorphism classes in $H_{\mathrm{\acute{e}t}}^{1}(\mathcal{C},T_{\mathcal{C}})$
of the $T_{\mathcal{C}}$-torsors $\rho_{n,2}:S_{n,2}\rightarrow\mathcal{C}$,
$n\geq2$, are pairwise distinct.
\end{example}

\section{$\mathbb{A}^{1}$-cylinders of smooth $\mathbb{A}^{1}$-fibered surfaces
of affine type}

Given a smooth quasi-projective $\mathbb{A}^{1}$-fibered surface
$\pi:S\rightarrow C$ with smooth relatively connected quotient $\alpha:\mathcal{C}\rightarrow C$,
the isomorphism class of $S$ as a scheme over $\mathcal{C}$ is determined
by the pair consisting of the isomorphism class in $\mathrm{Pic}(\mathcal{C})$
of the \'etale line bundle $p:L_{S/\mathcal{C}}\rightarrow\mathcal{C}$
under which the associated \'etale locally trivial $\mathbb{A}^{1}$-bundle
$\rho:S\rightarrow\mathcal{C}$ is an $L_{S/\mathcal{C}}$-torsor
and of the isomorphism class of this torsor in $H_{\mathrm{\acute{e}t}}^{1}(\mathcal{C},L_{S/\mathcal{C}})$.
In contrast, the following results shows in particular that when $C$
is an affine curve, the isomorphism class as scheme over $\mathcal{C}$
of the cylinder $S\times\mathbb{A}^{1}$ over $S$ is independent
of the class of $\rho:S\rightarrow\mathcal{C}$ in $H_{\mathrm{\acute{e}t}}^{1}(\mathcal{C},L_{S/\mathcal{C}})$.
\begin{thm}
\label{thm:Cylinders}Let $\pi:S\rightarrow C$ and $\pi':S'\rightarrow C$
be smooth quasi-projective $\mathbb{A}^{1}$-fibered surfaces over
a same \underline{affine} curve $C$. Let $\alpha:\mathcal{C}\rightarrow C$
and $\alpha':\mathcal{C}'\rightarrow C$ be their respective smooth
relatively connected quotients and let $\rho:S\rightarrow\mathcal{C}'$
and $\rho':S'\rightarrow\mathcal{C}'$ be the associated \'etale
$L_{S/\mathcal{C}}$-torsor and $L_{S'/\mathcal{C}'}$-torsor respectively.

Then the threefolds $S\times\mathbb{A}^{1}$ and $S'\times\mathbb{A}^{1}$
are isomorphic as schemes over $C$ if and only if there exists a
$C$-isomorphism $\psi:\mathcal{C}\rightarrow\mathcal{C}'$ such that
$\psi^{*}L_{S'/\mathcal{C}'}\simeq L_{S/\mathcal{C}}$.
\end{thm}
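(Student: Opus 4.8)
The plan is to prove both implications by first isolating the pair $(\mathcal{C},L_{S/\mathcal{C}})$ as an invariant of the $C$-scheme $S\times\mathbb{A}^{1}$: this directly yields the ``only if'' part and simultaneously provides the identifications needed to attack the ``if'' part. To recover $\mathcal{C}$, I would note that $\mathrm{pr}_{S}\colon S\times\mathbb{A}^{1}\to S$ composed with $\pi$ presents $S\times\mathbb{A}^{1}\to C$ as a flat morphism whose scheme-theoretic fiber over a closed point $c\in C$ is $\pi^{-1}(c)\times\mathbb{A}^{1}$. Its reduced connected components are in multiplicity-preserving bijection with those of $\pi^{-1}(c)$, so that the construction underlying Theorem~\ref{thm:Main-Struct} applied to $S\times\mathbb{A}^{1}\to C$ produces the \emph{same} smooth multifold algebraic space $C$-curve $\alpha\colon\mathcal{C}\to C$ together with the canonical factorisation $\rho\circ\mathrm{pr}_{S}\colon S\times\mathbb{A}^{1}\to\mathcal{C}$. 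By the uniqueness statement in Theorem~\ref{thm:Main-Struct}, $\mathcal{C}$ is intrinsic to $S\times\mathbb{A}^{1}$ as a $C$-scheme; hence any $C$-isomorphism $\Phi\colon S\times\mathbb{A}^{1}\to S'\times\mathbb{A}^{1}$ induces a $C$-isomorphism $\psi\colon\mathcal{C}\to\mathcal{C}'$ and is itself an isomorphism over $\psi$.

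To recover $L_{S/\mathcal{C}}$ and finish the ``only if'' direction, I would use that $\rho$ and $\mathrm{pr}_{S}$ are \'etale locally trivial $\mathbb{A}^{1}$-bundles, so pullback gives isomorphisms $\mathrm{Pic}(\mathcal{C})\xrightarrow{\sim}\mathrm{Pic}(S)\xrightarrow{\sim}\mathrm{Pic}(S\times\mathbb{A}^{1})$. Since $\rho^{*}L_{S/\mathcal{C}}\simeq T_{S/\mathcal{C}}$, the relative differentials satisfy $\Omega_{(S\times\mathbb{A}^{1})/\mathcal{C}}\simeq\mathrm{pr}_{S}^{*}\rho^{*}L_{S/\mathcal{C}}^{\vee}\oplus\mathcal{O}$, whence $\Lambda^{2}\Omega_{(S\times\mathbb{A}^{1})/\mathcal{C}}\simeq\mathrm{pr}_{S}^{*}\rho^{*}L_{S/\mathcal{C}}^{\vee}$, i.e.\ the pullback of $L_{S/\mathcal{C}}^{\vee}$ under the Picard isomorphisms above. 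As $\Phi$ is an isomorphism over $\psi$, it identifies the relative cotangent sheaves and thus their top exterior powers; transporting this identity through the Picard isomorphisms gives $\psi^{*}L_{S'/\mathcal{C}'}\simeq L_{S/\mathcal{C}}$.

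For the converse, the given $\psi$ lets me assume $\mathcal{C}=\mathcal{C}'$ and $L:=L_{S/\mathcal{C}}=L_{S'/\mathcal{C}}$, so that $\rho,\rho'$ are \'etale $L$-torsors of classes $\xi,\xi'\in H^{1}_{\mathrm{\acute{e}t}}(\mathcal{C},L)$. The decisive input is that $C$ affine forces $S,S'$ affine (the morphisms $\pi,\pi'$ are affine), hence $H^{1}(S,\mathcal{F})=H^{1}(S',\mathcal{F})=0$ for every quasi-coherent sheaf. When $L$ is trivial the surfaces are $\mathbb{G}_{a}$-torsors and the Danielewski fiber product trick applies verbatim: via its first projection $W=S\times_{\mathcal{C}}S'$ is the pullback $\rho^{*}S'$, a $\mathbb{G}_{a}$-torsor over the affine surface $S$, hence trivial, so $W\simeq S\times\mathbb{A}^{1}$; symmetrically $W\simeq S'\times\mathbb{A}^{1}$, and therefore $S\times\mathbb{A}^{1}\simeq S'\times\mathbb{A}^{1}$ over $C$.

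The remaining case of nontrivial $L$ is where the main obstacle lies: the triviality of $W\to S$ now only identifies $W$ with the total space $\mathrm{Tot}_{S}(\rho^{*}L)$ of the nontrivial line bundle $\rho^{*}L$, whose relative canonical bundle over $\mathcal{C}$ is $L^{\otimes 2}$ rather than $L$, so that $W\not\simeq S\times\mathbb{A}^{1}$ in general by the invariant computed above. To get around this I would trivialise $L$ by base change, exactly as in the local model of \S\ref{subsec:Toy-Example}: one chooses a finite Galois cover $q\colon\tilde{\mathcal{C}}\to\mathcal{C}$ with group $G$ on which $q^{*}L$ becomes trivial (a Kummer $\mu_{m}$-cover attached to the torsion of $L$ at the multiple points, as produced in Example~\ref{exa:Local-Alg-Space}), so that the pulled-back torsors $\tilde{S}=S\times_{\mathcal{C}}\tilde{\mathcal{C}}$ and $\tilde{S}'$ are $G$-equivariant $\mathbb{G}_{a}$-torsors over $\tilde{\mathcal{C}}$, still affine. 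Running the $\mathbb{G}_{a}$-torsor trick $G$-equivariantly produces $G$-equivariant isomorphisms $\tilde{S}\times\mathbb{A}^{1}\simeq\tilde{S}\times_{\tilde{\mathcal{C}}}\tilde{S}'\simeq\tilde{S}'\times\mathbb{A}^{1}$, after which descent along the \'etale quotients $\tilde{S}\to S$, $\tilde{S}'\to S'$ gives $S\times\mathbb{A}^{1}\simeq S'\times\mathbb{A}^{1}$ over $C$. The two delicate points to check — and the heart of the ``new variant'' of the trick — are the existence of a single finite cover trivialising $L$ (equivalently a reduction to the case where $L$ is torsion, the part of $L$ pulled back from $\mathrm{Pic}(C)$ being handled separately over the locus where $\alpha$ is an isomorphism) and the $G$-equivariance of the trivialising sections; the latter is controlled by the obstruction group $H^{1}(G,\Gamma(\tilde{S},\mathcal{O}))$, which vanishes because $G$ is finite and we work in characteristic zero.
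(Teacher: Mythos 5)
Your ``only if'' direction is essentially the paper's own argument and is correct: the quotient $\mathcal{C}$ is read off from the irreducible components (with multiplicities) of the fibers of $\pi\circ\mathrm{pr}_{S}$, and $L_{S/\mathcal{C}}$ is recovered as the determinant of the relative (co)tangent sheaf of $\rho\circ\mathrm{pr}_{S}$ transported through the Picard isomorphisms. Likewise, your reduction of the converse to $\mathcal{C}=\mathcal{C}'$, $L:=L_{S/\mathcal{C}}=L_{S'/\mathcal{C}}$, and your treatment of the case where $L$ is trivial (the classical fiber product trick applied to $S\times_{\mathcal{C}}S'$) agree with the paper and are fine.

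The case of nontrivial $L$, which you correctly identify as the crux, is where your proof has a genuine gap, for two separate reasons. First, the finite Galois cover $q:\tilde{\mathcal{C}}\rightarrow\mathcal{C}$ trivializing $L$ need not exist: if $q$ is finite flat of degree $d$ and $q^{*}L$ is trivial, the norm map gives $L^{\otimes d}\simeq\mathcal{O}_{\mathcal{C}}$, so only \emph{torsion} classes can be killed by finite covers, and $L_{S/\mathcal{C}}$ need not be torsion. For instance, take $C$ a once-punctured elliptic curve, $M\in\mathrm{Pic}(C)$ non-torsion and $\mathcal{C}$ the curve $C$ with one doubled point: then $L=\alpha^{*}M$ is non-torsion in $\mathrm{Pic}(\mathcal{C})$, yet $H_{\mathrm{\acute{e}t}}^{1}(\mathcal{C},L)\neq0$ and nontrivial $L$-torsors with affine total space exist (a Danielewski-type gluing of two copies of the total space of $M$), so this case cannot be dismissed; your parenthetical claim that the part of $L$ coming from $\mathrm{Pic}(C)$ is ``handled separately'' is not an argument. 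Second, even when $L$ is torsion, the equivariant descent cannot produce the cylinder: any trivialization $q^{*}L\simeq\mathcal{O}_{\tilde{\mathcal{C}}}$ is necessarily \emph{not} $G$-equivariant (otherwise $L$ would descend to the trivial bundle), as one sees concretely in Example \ref{exa:Mult2-fiber}, where $q^{*}T_{\mathcal{C}}$ is trivial but carries the linearization $\ell\mapsto-\ell$. Hence $G$ acts on the $\mathbb{A}^{1}$-factor of $\tilde{S}\times_{\tilde{\mathcal{C}}}\tilde{S}'\simeq\tilde{S}\times\mathbb{A}^{1}$ through this nontrivial linearization, and the quotient of $\tilde{S}\times_{\tilde{\mathcal{C}}}\tilde{S}'$ by the diagonal action is $S\times_{\mathcal{C}}S'$, which over $S$ is the total space of the \emph{nontrivial} line bundle $\rho^{*}L$ --- exactly the obstacle you started from, reappearing after descent. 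The relevant obstruction is this multiplicative linearization cocycle with values in $\Gamma(\tilde{\mathcal{C}},\mathcal{O}^{*})$, not the additive group $H^{1}(G,\Gamma(\tilde{S},\mathcal{O}))$, and it does not vanish; in effect your strategy amounts to asserting $S\times\mathbb{A}^{1}\simeq S\times_{\mathcal{C}}S'$, which is false as schemes over $S$ and unproven as abstract varieties. The paper's proof is designed precisely to avoid comparing $S$ and $S'$ directly: since $C$ is affine, $L$ admits a nonzero global section $s$ whose cokernel is torsion, so $H^{1}(s):H_{\mathrm{\acute{e}t}}^{1}(\mathcal{C},\mathbb{G}_{a})\rightarrow H_{\mathrm{\acute{e}t}}^{1}(\mathcal{C},L)$ is surjective; lifting the class of $S$ yields a $\mathbb{G}_{a,\mathcal{C}}$-torsor $\rho_{0}:S_{0}\rightarrow\mathcal{C}$ together with an $s$-equivariant affine morphism $S_{0}\rightarrow S$, whence $S_{0}$ is affine; then each of $W=S\times_{\mathcal{C}}S_{0}$ and $W'=S_{0}\times_{\mathcal{C}}S'$ is simultaneously a trivial $\mathbb{G}_{a}$-torsor over the affine surface $S$ (resp. $S'$), hence a cylinder, and a trivial $\rho_{0}^{*}L$-torsor over the affine surface $S_{0}$, hence the total space of the line bundle $\rho_{0}^{*}L$; the chain $S\times\mathbb{A}^{1}\simeq W\simeq W'\simeq S'\times\mathbb{A}^{1}$ concludes. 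This one-section trick, available exactly because $C$ is affine, is the idea missing from your proposal.
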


\begin{proof}
A $C$-isomorphism $\Psi:S\times\mathbb{A}^{1}\stackrel{\simeq}{\rightarrow}S'\times\mathbb{A}^{1}$
induces for each closed point $c\in C$ a multiplicity preserving
one-to-one correspondence between the irreducible components of the
fiber of $\pi\circ\mathrm{pr}_{S}$ over $c$ and the irreducible
components of the fiber of $\pi'\circ\mathrm{pr}_{S'}$ over $c$.
It follows from the construction of the smooth relatively connected
quotients $\alpha:\mathcal{C}\rightarrow C$ and $\alpha':\mathcal{C}'\rightarrow C$
that $\Psi$ induces a $C$-isomorphism $\psi:\mathcal{C}\rightarrow\mathcal{C}'$
such that $\psi\circ(\rho\circ\mathrm{pr}_{S})=(\rho'\circ\mathrm{pr}_{S'})\circ\Psi$.
Since the morphisms $\rho\circ\mathrm{pr}_{S}:S\times\mathbb{A}^{1}\rightarrow\mathcal{C}$
and $\rho'\circ\mathrm{pr}_{S'}:S'\times\mathbb{A}^{1}\rightarrow\mathcal{C}'$
are \'etale locally trivial $\mathbb{A}^{2}$-bundles over $\mathcal{C}$
and $\mathcal{C}'$ respectively, the pull-back homomorphisms 
\[
(\rho\circ\mathrm{pr}_{S})^{*}:\mathrm{Pic}(\mathcal{C})\rightarrow\mathrm{Pic}(S\times\mathbb{A}^{1})\quad\textrm{and}\quad(\rho'\circ\mathrm{pr}_{S'})^{*}:\mathrm{Pic}(\mathcal{C}')\rightarrow\mathrm{Pic}(S'\times\mathbb{A}^{1})
\]
are both isomorphisms. Let $T_{S\times\mathbb{A}^{1}/\mathcal{C}}$
and $T_{S'\times\mathbb{A}^{1}/\mathcal{C}'}$ be the relative tangent
bundles of the morphisms $\rho\circ\mathrm{pr}_{S}$ and $\rho'\circ\mathrm{pr}_{S'}$
respectively. By definition of $L_{S/\mathcal{C}}$ and $L_{S'/\mathcal{C}'}$
(see subsection \ref{subsec:L-torsors}), we have 
\[
\Lambda^{2}T_{S\times\mathbb{A}^{1}/\mathcal{C}}\simeq\mathrm{pr}_{S}^{*}T_{S/\mathcal{C}}\simeq(\rho\circ\mathrm{pr}_{S})^{*}L_{S/\mathcal{C}}\;\textrm{and}\;\Lambda^{2}T_{S'\times\mathbb{A}^{1}/\mathcal{C}'}\simeq\mathrm{pr}_{S'}^{*}T_{S'/\mathcal{C}'}\simeq(\rho'\circ\mathrm{pr}_{S'})^{*}L_{S'/\mathcal{C}'}.
\]
Since on the other hand $\Psi$ is an isomorphism and $\psi\circ(\rho\circ\mathrm{pr}_{S})=(\rho'\circ\mathrm{pr}_{S'})\circ\Psi$,
we have
\[
\begin{array}{ccc}
(\rho\circ\mathrm{pr}_{S})^{*}L_{S/\mathcal{C}}\simeq\Lambda^{2}T_{S\times\mathbb{A}^{1}/\mathcal{C}} & \simeq & \Psi^{*}\Lambda^{2}T_{S'\times\mathbb{A}^{1}/\mathcal{C}'}\\
 & \simeq & \Psi^{*}((\rho'\circ\mathrm{pr}_{S'})^{*}L_{S'/\mathcal{C}'})\\
 & \simeq & (\rho\circ\mathrm{pr}_{S})^{*}(\psi^{*}L_{S'/\mathcal{C}'})
\end{array}
\]
from which it follows that $L_{S/\mathcal{C}}\simeq\psi^{*}L_{S'/\mathcal{C}'}$.

Conversely, assume that there exists a $C$-isomorphism $\psi:\mathcal{C}\rightarrow\mathcal{C}'$
such that $\psi^{*}L_{S'/\mathcal{C}'}\simeq L_{S/\mathcal{C}}$.
Letting $S''=S'\times_{\mathcal{C}'}\mathcal{C}$ and $\pi''=\alpha\circ\mathrm{pr}_{\mathcal{C}}:S''\rightarrow C$,
it is enough to constructs a $\mathcal{C}$-isomorphism $S\times\mathbb{A}^{1}\simeq S''\times\mathbb{A}^{1}$.
We can thus assume without loss of generality that $\mathcal{C}=\mathcal{C}'$
and that $\psi=\mathrm{id}_{\mathcal{C}}$. We let $L=L_{S/\mathcal{C}}=L_{S'/\mathcal{C}}$.
Note that since $C$ is affine and the morphisms $\pi:S\rightarrow C$
and $\pi':S'\rightarrow C'$ are affine by definition, the surfaces
$S$ and $S'$ are both affine. Since $C$ is affine, the \'etale
line bundle $p:L_{S/\mathcal{C}}\rightarrow\mathcal{C}$ has a non-zero
section. Indeed, given any rational section $\sigma$ of $L$ we can
find a non-zero regular function $f$ on $C$ which vanishes sufficiently
on the images by $\alpha:\mathcal{C}\rightarrow C$ of the poles of
$\sigma$ so that $s=(\alpha^{*}f)\sigma$ is a regular global section
of $L$ on $\mathcal{C}$. The cokernel $\mathcal{Q}$ of $s$ viewed
as an injective homomorphism $\mathcal{O}_{\mathcal{C}}\rightarrow\mathcal{L}$,
where $\mathcal{L}$ denote the \'etale sheaf of germs of sections
of $L$, is a torsion sheaf on $\mathcal{C}$. It follows that $H_{\mathrm{\acute{e}t}}^{1}(\mathcal{C},\mathcal{Q})=0$.
Considering the long exact sequence of \'etale cohomology associated
to the short exact sequence
\[
0\rightarrow\mathcal{O}_{\mathcal{C}}\stackrel{s}{\rightarrow}\mathcal{L}\rightarrow\mathcal{Q}\rightarrow0,
\]
 of \'etale sheaves on $\mathcal{C}$, we conclude that the homomorphism
\[
H^{1}(s):H_{\mathrm{\acute{e}t}}^{1}(\mathcal{C},\mathbb{G}_{a})=H_{\mathrm{\acute{e}t}}^{1}(\mathcal{C},\mathcal{O}_{\mathcal{C}})\rightarrow H_{\mathrm{\acute{e}t}}^{1}(\mathcal{C},\mathcal{L})=H_{\mathrm{\acute{e}t}}^{1}(\mathcal{C},L)
\]
is surjective. This implies the existence of an \'etale $\mathbb{G}_{a,\mathcal{C}}$-torsor
$\rho_{0}:S_{0}\rightarrow\mathcal{C}$ and a $\mathcal{C}$-morphism
$\xi:S_{0}\rightarrow S$ of torsors which is equivariant for the
homomorphism of group schemes 
\[
\mathrm{Spec}(\mathrm{Sym}^{\cdot}(^{t}s)):\mathbb{G}_{a,\mathcal{C}}=\mathrm{Spec}(\mathrm{Sym}^{\cdot}\mathcal{O}_{\mathcal{C}})\rightarrow L=\mathrm{Spec}(\mathrm{Sym}^{\cdot}\mathcal{L}^{\vee})
\]
induced by $s$ and which has the property that the isomorphism class
of the $L$-torsor $\rho:S\rightarrow\mathcal{C}$ in $H_{\mathrm{\acute{e}t}}^{1}(\mathcal{C},L)$
is the image by $H^{1}(s)$ of the isomorphism class of the \'etale
$\mathbb{G}_{a,\mathcal{C}}$-torsor $\rho_{0}:S_{0}\rightarrow\mathcal{C}$
in $H_{\mathrm{\acute{e}t}}^{1}(\mathcal{C},\mathbb{G}_{a})$. Since
$S$ is affine and $\xi:S_{0}\rightarrow S$ is an affine morphism,
$S_{0}$ is an affine surface. Now consider the fiber products 
\[
W=S\times_{\mathcal{C}}S_{0}\quad\textrm{and}\quad W'=S_{0}\times_{\mathcal{C}}S'.
\]
Since $\rho_{0}:S_{0}\rightarrow\mathcal{C}$ is an \'etale $\mathbb{G}_{a,\mathcal{C}}$-torsor,
the projections $\mathrm{pr}_{S}:W\rightarrow S$ and $\mathrm{pr}_{S'}:W'\rightarrow S'$
are \'etale $\mathbb{G}_{a}$-torsors over $S$ and $S'$ respectively.
Since $S$ and $S'$ are both affine, these torsors are thus trivial
\cite[Proposition 16.5.16]{EGAIV-4}, respectively isomorphic to $S\times\mathbb{A}^{1}$
and $S'\times\mathbb{A}^{1}$ on which $\mathbb{G}_{a}$ acts by translations
on the second factor. On the other hand, the projections $\mathrm{pr}_{S_{0}}:W\rightarrow S_{0}$
and $\mathrm{pr}_{S_{0}}:W'\rightarrow S_{0}$ are both \'etale torsors
under the \'etale line bundle $\rho_{0}^{*}L$ on $S_{0}$. Since
$S_{0}$ is an affine scheme, $\rho_{0}^{*}L$ is a Zariski locally
trivial line bundle and the vanishing of $H_{\mathrm{\acute{e}t}}^{1}(S_{0},\rho_{0}^{*}L)=H^{1}(S_{0},\rho_{0}^{*}L)$
implies that $W$ and $W'$ are both isomorphic to the trivial $\rho_{0}^{*}L$-torsor
over $S_{0}$, that is, to the total space of the line bundle $\rho_{0}^{*}L$
on $S_{0}$. Summing up, we have constructed isomorphisms
\[
S\times\mathbb{A}^{1}\simeq W\simeq\rho^{*}L_{0}\simeq W'\simeq S'\times\mathbb{A}^{1}
\]
 of schemes over $\mathcal{C}$. This completes the proof.
\end{proof}
\begin{rem}
The proof of Theorem \ref{thm:Cylinders} depends in a crucial way
on the existence of an \'etale $\mathbb{G}_{a,\mathcal{C}}$-torsor
$\rho_{0}:S_{0}\rightarrow\mathcal{C}$ with affine total space $S_{0}$.
Such a torsor does not exist in general if the curve $C$ is not affine.
For instance, if $\mathcal{C}=C=\mathbb{P}^{1}$, the vanishing of
$H^{1}(\mathbb{P}^{1},\mathcal{O}_{\mathbb{P}^{1}})$ implies that
the only $\mathbb{G}_{a}$-torsor over $\mathbb{P}^{1}$ is the trivial
one $\mathbb{P}^{1}\times\mathbb{A}^{1}$ on which $\mathbb{G}_{a}$
acts by translations on second factor, whose total space is not affine.
Similiarly, if $\mathcal{C}=C$ is a smooth elliptic curve, then the
total space of the unique non-trivial $\mathbb{G}_{a}$-torsor $\rho_{0}:S_{0}\rightarrow C$
corresponding to the unique non-trivial extension 
\[
0\rightarrow\mathcal{O}_{C}\rightarrow\mathcal{E}\rightarrow\mathcal{O}_{C}\rightarrow0
\]
via the isomorphism $\mathrm{Ext}^{1}(\mathcal{O}_{C},\mathcal{O}_{C})\simeq H^{1}(C,\mathcal{O}_{C})\simeq\mathbb{C}$
is a quasi-projective surface which is not affine.
\end{rem}

\begin{example}
Consider again the smooth affine surfaces $S_{n,2}$ in $\mathbb{A}^{3}$
defined by the equations $x^{n}z=y^{2}-x$, $n\geq2$. By Example
\ref{exa:Mult2-fiber}, each of these surfaces is an \'etale torsor
$\rho_{n,2}:S_{n,2}\rightarrow\mathcal{C}$ under the tangent line
bundle $T_{\mathcal{C}}$ of the multifold algebraic space curve $\alpha:\mathcal{C}\rightarrow\mathbb{A}^{1}$
obtained as the quotient of the affine line with a double origin by
a free $\mu_{2}$-action. One can check that up to composition by
automorphisms of $\mathbb{A}^{1}$, $\pi_{n,2}=\mathrm{pr}_{x}|_{S_{n,2}}:S_{n,2}\rightarrow\mathbb{A}^{1}$
is the unique $\mathbb{A}^{1}$-fibration of affine type on $S_{n,2}$.
Combined with the fact that the \'etale $\mathbb{A}^{1}$-bundles
$\rho_{n,2}:S_{n,2}\rightarrow\mathcal{C}$ are pairwise non-isomorphic,
this implies that the surfaces $S_{n,2}$ are pairwise non-isomorphic
as abstract varieties. On the other hand, it follows from Theorem
\ref{thm:Cylinders} that the cylinders $S_{n,2}\times\mathbb{A}^{1}$,
$n\geq2$, are all isomorphic. We thus recover a particular case of
non-cancellation for so-called \emph{affine pseudo-planes} studied
in \cite{MaMy-05}.
\end{example}

We conclude with the following theorem which provides a complete answer
to the cancellation problem for smooth affine surfaces admitting $\mathbb{A}^{1}$-fibrations
of affine type.
\begin{thm}
\label{thm:Non-Cancel} Let $S$ be a smooth affine surface and let
$\pi:S\rightarrow C$ be an $\mathbb{A}^{1}$-fibration over a smooth
affine curve $C$. Then the following alternative holds:

a) If $\pi:S\rightarrow C$ is isomorphic to the structure morphism
of a line bundle over $C$ then every smooth affine surface $S'$
such that $S'\times\mathbb{A}^{1}\simeq S\times\mathbb{A}^{1}$ is
isomorphic to $S$.

b) Otherwise, there exists a smooth affine $\mathbb{A}^{1}$-fibered
surface $S'$ non-isomorphic to $S$ such that $S\times\mathbb{A}^{1}$
is isomorphic to $S'\times\mathbb{A}^{1}$.
\end{thm}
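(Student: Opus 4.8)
The plan is to first observe that the two cases of the statement correspond exactly to whether or not the smooth relatively connected quotient $\alpha:\mathcal{C}\to C$ of $\pi$ coincides with $C$. Indeed, a line bundle over $C$ has all its scheme-theoretic fibers reduced and irreducible, so by Proposition \ref{prop:No-degen-implies-A1Bun } it has no degenerate fiber and $\mathcal{C}=C$; conversely, if $\mathcal{C}=C$ then $\rho=\pi:S\to C$ is an \'etale $L_{S/C}$-torsor over the affine curve $C$, and since $H^{1}(C,L_{S/C})=0$ for a line bundle on an affine curve, this torsor is trivial, so $\pi$ is the structure morphism of the line bundle $L_{S/C}$. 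Thus case (a) is precisely the case $\mathcal{C}=C$, and case (b) is precisely the case where $\pi$ admits at least one degenerate fiber, i.e. $\mathcal{C}\neq C$.

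For part (a), write $S=\mathrm{Tot}(L)$ with $\mathcal{C}=C$. If $C\simeq\mathbb{A}^{1}$ then $\mathrm{Pic}(\mathbb{A}^{1})=0$ forces $S\simeq\mathbb{A}^{2}$, and cancellation holds by \cite{Miy75,MS80}. If $C\not\simeq\mathbb{A}^{1}$, I would recover $C$ intrinsically from the cylinder: since every morphism from $\mathbb{A}^{1}$, hence from $\mathbb{A}^{2}$, to $C$ is constant, the projection $S\times\mathbb{A}^{1}\to C$ is the unique $\mathbb{A}^{2}$-fibration on $S\times\mathbb{A}^{1}$ up to an automorphism of the base. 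Consequently any isomorphism $S\times\mathbb{A}^{1}\simeq S'\times\mathbb{A}^{1}$ may be taken to be a $C$-isomorphism, and its target is again such an $\mathbb{A}^{2}$-fibration, forcing $S'$ to carry an $\mathbb{A}^{1}$-fibration $\pi':S'\to C$. Theorem \ref{thm:Cylinders} then yields a $C$-isomorphism $\psi:\mathcal{C}\to\mathcal{C}'$ with $\psi^{*}L_{S'/\mathcal{C}'}\simeq L_{S/\mathcal{C}}=L$; as $\mathcal{C}=C$ this forces $\mathcal{C}'=C$, so $S'$ has no degenerate fiber and is itself a line bundle $L'$ over $C$ with $L'\simeq L$, whence $S'\simeq S$. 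The delicate point is the intrinsic recovery of $C$ from the threefold, i.e. the uniqueness up to base automorphism of the $\mathbb{A}^{2}$-fibration $S\times\mathbb{A}^{1}\to C$ when $C\not\simeq\mathbb{A}^{1}$.

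For part (b) we have $\mathcal{C}\neq C$, so $\rho:S\to\mathcal{C}$ is an \'etale $L$-torsor of class $\eta\in H^{1}_{\mathrm{\acute{e}t}}(\mathcal{C},L)$, with $L=L_{S/\mathcal{C}}$. The idea is to keep $\mathcal{C}$ and $L$ fixed and vary the torsor class. By Theorem \ref{thm:Cylinders}, applied with $\mathcal{C}'=\mathcal{C}$, $L'=L$ and $\psi=\mathrm{id}$, any two such torsors with smooth affine total spaces have $C$-isomorphic cylinders; it therefore suffices to produce $\eta'\in H^{1}_{\mathrm{\acute{e}t}}(\mathcal{C},L)$ whose associated surface $S'$ is smooth affine and not isomorphic to $S$. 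First I would isolate the subset $\Sigma\subset H^{1}_{\mathrm{\acute{e}t}}(\mathcal{C},L)$ of classes with separated total space: near each of the finitely many non-separated points of $\mathcal{C}$ the total space is a scheme exactly when the local component of the class is non-trivial, an open condition cutting out $\Sigma$ as the complement of finitely many proper subspaces, and for such classes $S'$ is a smooth affine surface (the Danielewski-type local model). The key computation is that $H^{1}_{\mathrm{\acute{e}t}}(\mathcal{C},L)$ is infinite-dimensional over $\mathbb{C}$ as soon as $\mathcal{C}\neq C$: on the local model of a doubled point a \v{C}ech computation on the two-chart cover yields a cokernel isomorphic to $u^{-1}\mathbb{C}[u^{-1}]$.

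Finally I would compare isomorphism classes. The surfaces $S_{\eta'}$, $\eta'\in\Sigma$, that are abstractly isomorphic to $S$ are controlled by the action on $H^{1}_{\mathrm{\acute{e}t}}(\mathcal{C},L)$ of the group generated by $\mathrm{Aut}_{C}(\mathcal{C})$, the scalings $\mathbb{G}_{m}\subset\mathrm{Aut}(L)$ and the translations $H^{0}(\mathcal{C},L)$; since the multiplicities of the degenerate fibers pin down $\mathcal{C}$ and $L$ up to this finite-dimensional group, the classes meeting a single orbit form a finite-dimensional subfamily. As $\Sigma$ is infinite-dimensional it cannot be exhausted by the orbit of $\eta$, so some $\eta'\in\Sigma$ gives $S'\not\simeq S$, and Theorem \ref{thm:Cylinders} provides $S\times\mathbb{A}^{1}\simeq S'\times\mathbb{A}^{1}$; the explicit family $S_{n,2}$ of Example \ref{exa:Mult2-fiber} realizes exactly such a collection. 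I expect the main obstacle to be this last step, namely controlling abstract (not necessarily fibration-preserving) isomorphisms between the $S_{\eta'}$, which is genuinely subtle when $C\simeq\mathbb{A}^{1}$ and $S$ carries several $\mathbb{A}^{1}$-fibrations of affine type, so that recovering $\mathcal{C}$ and $L$ from the bare surface requires bounding the number of such fibrations.
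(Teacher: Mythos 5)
Your reduction of the dichotomy to whether $\mathcal{C}=C$, and your treatment of part (a), follow the paper's own argument essentially verbatim (descent of $(\pi\circ\mathrm{pr}_{S})\circ\Psi$ using constancy of morphisms $\mathbb{A}^{1}\rightarrow C$, then the line-bundle/affine-base case of the cylinder theorem), and that part is fine. For part (b) you take a genuinely different route: instead of constructing one explicit torsor $\rho':S'\rightarrow\mathcal{C}$ whose class maps to that of $\rho$ under $H^{1}(s)$ for a well-chosen $f\in\Gamma(C,\mathcal{O}_{C})$, as the paper does, you propose a pigeonhole argument: the set $\Sigma$ of classes with affine total space is the complement of finitely many proper subspaces of the infinite-dimensional space $H_{\mathrm{\acute{e}t}}^{1}(\mathcal{C},L)$, while the classes giving surfaces isomorphic to $S$ form a finite-dimensional orbit, so a good class exists.

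The gap is exactly the step you flag at the end, and it is not a technicality you can defer: it is the entire difficulty of the theorem. Your finite-dimensionality claim for the ``bad'' set rests on the assertion that an abstract isomorphism $S_{\eta'}\simeq S$ is controlled by the group generated by $\mathrm{Aut}_{C}(\mathcal{C})$, scalings and translations acting on $H_{\mathrm{\acute{e}t}}^{1}(\mathcal{C},L)$. That assertion presupposes that the isomorphism carries the fibration $\pi_{\eta'}$ to $\pi$ (up to an automorphism of $C$), so that it descends to the relatively connected quotients; without fibration-preservation the isomorphism induces nothing at all on $\mathcal{C}$ and your group action does not see it. This is not a hypothetical worry: for Danielewski's $S_{0}=\{xz=y^{2}-1\}$ the two projections $\mathrm{pr}_{x}$ and $\mathrm{pr}_{z}$ are distinct $\mathbb{A}^{1}$-fibrations of affine type, so surfaces in your family could a priori be matched to $S$ through a different fibration, and the ``bad'' set is then not visibly contained in any finite-dimensional orbit. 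The paper closes precisely this hole by exploiting the freedom in the choice of $f$: taking $f$ to vanish to sufficiently high order at the points of $C$ with degenerate fibers forces an SNC-minimal projective completion of $S'$ whose boundary divisor is not a chain, whence (by \cite{Be83} or \cite[Theorem 2.16]{Dub04}) $\pi'=\alpha\circ\rho'$ is the \emph{unique} $\mathbb{A}^{1}$-fibration of affine type on $S'$; any abstract isomorphism $S\rightarrow S'$ then must descend to an automorphism $\psi$ of $\mathcal{C}$, and the second condition on $f$ (the classes of $\rho$ and $\rho'$ lie in distinct orbits of $\mathrm{Aut}(\mathcal{C})$ acting on $\mathbb{P}H_{\mathrm{\acute{e}t}}^{1}(\mathcal{C},L_{S/\mathcal{C}})$) rules that out. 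To salvage your pigeonhole argument you would need an analogous statement, e.g.\ that outside a further finite-dimensional (or otherwise small) subset of $\Sigma$ the surface $S_{\eta'}$ has a unique $\mathbb{A}^{1}$-fibration of affine type; this is plausible, via the same completion criterion, but it is the mathematical content of the proof and cannot be left as an expectation. Note also that even in the fibration-preserving analysis you should allow all automorphisms of $\mathcal{C}$ (lying over automorphisms of $C$), not only $C$-automorphisms, as the base automorphism of $C$ need not be the identity.
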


\begin{proof}[Sketch of proof]
 Assume that $\pi:S\rightarrow C$ is a line bundle, let $S'$ be
a smooth affine surface and let $\Psi:S'\times\mathbb{A}^{1}\stackrel{\simeq}{\rightarrow}S\times\mathbb{A}^{1}$
be an isomorphism of abstract algebraic varieties. If $C\simeq\mathbb{A}^{1}$
then $\pi:S\rightarrow\mathbb{A}^{1}$ is a trivial line bundle so
that $S\simeq\mathbb{A}^{2}$. The assertion then follows from \cite{MS80}.
Otherwise, if $C\not\simeq\mathbb{A}^{1}$ then since every morphism
$\mathbb{A}^{1}\rightarrow C$ is constant, it follows that the composition
of $\Psi$ with the projection $\pi\circ\mathrm{pr}_{S}:S\times\mathbb{A}^{1}\rightarrow C$
descends to a unique morphism $\pi':S'\rightarrow C$ such that $(\pi\circ\mathrm{pr}_{S})\circ\Psi=\pi'\circ\mathrm{pr}_{S'}$.
Since $\pi\circ\mathrm{pr}_{S}$ is a Zariski locally trivial $\mathbb{A}^{2}$-bundle,
it follows that the same holds for $\pi'\circ\mathrm{pr}_{S'}$. This
implies in turn that $\pi':S'\rightarrow C$ is an $\mathbb{A}^{1}$-fibration
without degenerate fibers, hence is a line bundle as $C$ is affine.
Arguing as in the proof of Theorem\ref{thm:Cylinders}, we conclude
that $\pi:S\rightarrow C$ and $\pi':S'\rightarrow C$ are isomorphic
line bundles over $C$, hence that $S\simeq S'$.

Now assume that $\pi:S\rightarrow C$ is not isomorphic to the structure
morphism of a line bundle over $C$. Since $C$ is affine, it follows
that $\pi:S\rightarrow C$ has at least a degenerate fiber. Let $\alpha:\mathcal{C}\rightarrow C$
be the smooth relatively connected quotient of $\pi:S\rightarrow C$
and let $\rho:S\rightarrow\mathcal{C}$ be the associated \'etale
$L_{S/\mathcal{C}}$-torsor. Since $\pi:S\rightarrow C$ has a degenerate
fiber, it follows from the construction of the smooth relatively connected
quotient that $\mathcal{C}$ is a non-separated algebraic space or
scheme and that $\alpha$ is not an isomorphism. Since $S$ is affine,
hence is a separated scheme, this implies in particular that $\rho:S\rightarrow\mathcal{C}$
is a non-trivial \'etale $L_{S/\mathcal{C}}$-torsor whose isomorphism
class in $H_{\mathrm{\acute{e}t}}^{1}(\mathcal{C},L_{S/\mathcal{C}})$
is thus a non-zero element.

Let $\mathcal{L}$ be the \'etale invertible sheaf of germs of section
of $L_{S/\mathcal{C}}$ and let $f$ be a non-zero regular function
on $C$. The multiplication by $\alpha^{*}f$ defines an injective
homomorphism $s:\mathcal{L}\rightarrow\mathcal{L}$ which determines
in turn a non-zero homomorphism of group schemes $\zeta:L_{S/\mathcal{C}}\rightarrow L_{S/\mathcal{C}}$.
Arguing as in the proof of Theorem \ref{thm:Cylinders}, we can find
an \'etale $L_{S/\mathcal{C}}$-torsor $\rho':S'\rightarrow\mathcal{C}$
with affine total space and an $s$-equivariant $\mathcal{C}$-morphism
$\xi:S'\rightarrow S$ of torsors with the property that the isomorphism
class of $\rho:S\rightarrow\mathcal{C}$ in $H_{\mathrm{\acute{e}t}}^{1}(\mathcal{C},L_{S/\mathcal{C}})$
is the image by $H^{1}(s)$ of the isomorphism class of $\rho':S'\rightarrow\mathcal{C}$
in $H_{\mathrm{\acute{e}t}}^{1}(\mathcal{C},L_{S/\mathcal{C}})$.

We claim that by choosing $f\in\Gamma(C,\mathcal{O}_{C})$ appropriately,
we can ensure simultaneously that on the one hand the morphism $\pi'=\alpha\circ\rho':S'\rightarrow C$
is the unique $\mathbb{A}^{1}$-fibration of affine type on $S'$
and that on the other hand, for every automorphism $\psi$ of $\mathcal{C}$,
the $\mathbb{A}^{1}$-bundles $\rho:S\rightarrow\mathcal{C}$ and
$\mathrm{pr}_{2}:S'\times_{\rho',\mathcal{C},\psi}\mathcal{C}\rightarrow\mathcal{C}$
are not isomorphic. Indeed, the first condition is automatically satisfied
unless $C$ is isomorphic to the affine line $\mathbb{A}^{1}$. In
the case where $C=\mathbb{A}^{1}$, it is enough to choose a regular
function $f$ which vanishes sufficiently at the points of $C$ over
which the fibers of $\pi:S\rightarrow C$ are degenerate to ensure
the existence of an SNC-minimal projective completion $\overline{S}'$
of $S'$ whose boundary divisor is not a chain, a property which implies
that $\pi':S'\rightarrow C$ is the unique $\mathbb{A}^{1}$-fibration
of affine type on $S'$ (see e.g. \cite[Théorème 1.8]{Be83} or \cite[Theorem 2.16]{Dub04}).
Similarly, choosing $f$ so that it vanishes at the points of $C$
over which the fibers of $\pi:S\rightarrow C$ are degenerate is enough
to guarantee that the images of the isomorphism classes of $\rho:S\rightarrow\mathcal{C}$
and $\rho':S'\rightarrow\mathcal{C}$ in $\mathbb{P}H_{\mathrm{\acute{e}t}}^{1}(\mathcal{C},L_{S/\mathcal{C}})$
do not belong the same orbit of the action of the automorphism group
of $\mathcal{C}$.

To conclude, one checks that for a regular function $f$ satisfying
the two properties above, the surfaces $S$ and $S'$ are not isomorphic
as abstract algebraic varieties. On the other hand, since $\rho:S\rightarrow\mathcal{C}$
and $\rho':S'\rightarrow\mathcal{C}$ are both \'etale $L_{S/\mathcal{C}}$-torsors,
we deduce from Theorem \ref{thm:Cylinders} that $S\times\mathbb{A}^{1}$
is isomorphic to $S'\times\mathbb{A}^{1}$.
\end{proof}
\begin{example}
(Danielewski counter-example \cite{Dan89} revisited). Let $S_{0}$
be the smooth surface in $\mathbb{A}^{3}$ defined by the equation
$xz=y^{2}-1$. The restriction of the projection $\mathrm{pr}_{x}$
on $S$ defines a smooth $\mathbb{A}^{1}$-fibration $\pi_{0}:S_{0}\rightarrow\mathbb{A}^{1}$
restricting to a trivial $\mathbb{A}^{1}$-bundle over $\mathbb{A}^{1}\setminus\{0\}$
and whose fiber $\pi^{-1}(0)$ is reduced, consisting of two disjoint
copies $\{x=y\pm1=0\}$ of the affine line $\mathbb{A}^{1}=\mathrm{Spec}(\mathbb{C}[z])$.
The smooth relatively connected quotient of $\pi_{0}:S_{0}\rightarrow\mathbb{A}^{1}$
is thus isomorphic to the affine line with a double origin $\alpha:\tilde{\mathbb{A}}^{1}\rightarrow\mathbb{A}^{1}$.
On checks using local trivialization as in the Example in subsection
\ref{subsec:Toy-Example} that the induced morphism $\rho_{0}:S_{0}\rightarrow\tilde{\mathbb{A}}^{1}$
is a $\mathbb{G}_{a,\tilde{\mathbb{A}}^{1}}$-torsor, whose isomorphism
class in $H^{1}(\tilde{\mathbb{A}}^{1},\mathbb{G}_{a})=H^{1}(\tilde{\mathbb{A}}^{1},\mathcal{O}_{\tilde{\mathbb{A}}_{1}})$
is equal to the class of the \v{C}ech cocycle $g_{0}=2x^{-1}\in C^{1}(\mathcal{U},\mathcal{O}_{\tilde{\mathbb{A}}^{1}})\simeq\mathbb{C}[x^{\pm1}]$
for the natural open cover $\mathcal{U}$ of $\tilde{\mathbb{A}}^{1}$
by two copies of $\mathbb{A}^{1}=\mathrm{Spec}(\mathbb{C}[x])$. Let
$s_{n}:\mathcal{O}_{\tilde{\mathbb{A}}^{1}}\rightarrow\mathcal{O}_{\tilde{\mathbb{A}}^{1}}$
be the injective homomorphism given by the multiplication by $x^{n}$
and let 
\[
\zeta:\mathbb{G}_{a,\tilde{\mathbb{A}}^{1}}=\mathrm{\mathrm{Spec}}(\mathcal{O}_{\tilde{\mathbb{A}}^{1}}[t])\rightarrow\mathrm{\mathrm{Spec}}(\mathcal{O}_{\tilde{\mathbb{A}}^{1}}[t])=\mathbb{G}_{a,\tilde{\mathbb{A}}^{1}},\;t\mapsto x^{n}t
\]
 be the corresponding homomorphism of group schemes over $\tilde{\mathbb{A}}^{1}$.
The induced homomorphism 
\[
H^{1}(s_{n}):H^{1}(\tilde{\mathbb{A}}^{1},\mathcal{O}_{\tilde{\mathbb{A}}^{1}})\rightarrow H^{1}(\tilde{\mathbb{A}}^{1},\mathcal{O}_{\tilde{\mathbb{A}}^{1}})
\]
maps the class in $H^{1}(\tilde{\mathbb{A}}^{1},\mathcal{O}_{\tilde{\mathbb{A}}^{1}})$
of the \v{C}ech cocycle $g_{n}=2x^{-n-1}\in C^{1}(\mathcal{U},\mathcal{O}_{\tilde{\mathbb{A}}^{1}})$
onto the isomorphism class of the $\mathbb{G}_{a,\tilde{\mathbb{A}}^{1}}$-torsor
$\rho_{0}:S_{0}\rightarrow\tilde{\mathbb{A}}^{1}$. It follows that
there exists a $\mathbb{G}_{a,\tilde{\mathbb{A}}^{1}}$-torsor $\rho_{n}:S_{n}\rightarrow\tilde{\mathbb{A}}^{1}$
whose isomorphism class in $H^{1}(\tilde{\mathbb{A}}^{1},\mathcal{O}_{\tilde{\mathbb{A}}^{1}})$
is equal to the class of the cocyle $g_{n}$ and a $\zeta$-equivariant
morphism of $\mathbb{G}_{a,\tilde{\mathbb{A}}^{1}}$-torsors $\xi:S_{n}\rightarrow S_{0}$.

The desired $\mathbb{G}_{a,\tilde{\mathbb{A}}^{1}}$-torsor $\rho_{n}:S_{n}\rightarrow\tilde{\mathbb{A}}^{1}$
is given for instance by the smooth affine surface $S_{n}$ in $\mathbb{A}^{3}$
with equation $x^{n+1}z=y^{2}-1$ endowed with the factorization $\rho_{n}:S_{n}\rightarrow\tilde{\mathbb{A}}^{1}$
of the smooth $\mathbb{A}^{1}$-fibration $\pi_{n}:S_{n}\rightarrow\mathbb{A}^{1}$
induced by the restriction of the projection $\mathrm{pr}_{x}$. A
corresponding $s_{n}$-equivariant morphism is simply the birational
morphism 
\[
\xi:S_{n}\rightarrow S_{0},\quad(x,y,z)\mapsto(x,y,x^{n}z).
\]

By Theorem \ref{thm:Cylinders}, the cylinders $S_{n}\times\mathbb{A}^{1}$,
$n\geq0$, are all isomorphic. Clearly, the surface $S_{0}$ admits
a second $\mathbb{A}^{1}$-fibration of affine type given by the restriction
of the projection $\mathrm{pr}_{z}$. On the other hand, for every
$n\geq2$, $\pi_{n}:S_{n}\rightarrow\mathbb{A}^{1}$ is the unique
$\mathbb{A}^{1}$-fibration of affine type on $S_{n}$ up to composition
by automorphisms of $\mathbb{A}^{1}$ (see e.g. \cite{ML01}). It
follows that for every $n\geq2$, $S_{n}$ is not isomorphic to $S_{0}$.
Actually, the surfaces $S_{n}$, $n\geq0$, are even pairwise non
isomorphic \cite{Dan89,ML01}.
\end{example}

\bibliographystyle{amsplain}

\end{document}